\def\RSsubtxt{section~}\newref{sub}{name = \RSsubtxt}}
\def\RSthmtxt{theorem~}\newref{thm}{name = \RSthmtxt}}
\def\RSlemtxt{lemma~}\newref{lem}{name = \RSlemtxt}}
\numberwithin{figure}{section}
\numberwithin{equation}{section}
\theoremstyle{plain}
\newtheorem{thm}{\protect\theoremname}[section]
  \theoremstyle{plain}
  \newtheorem{cor}[thm]{\protect\corollaryname}
  \theoremstyle{plain}
  \newtheorem{prop}[thm]{\protect\propositionname}
 \newlist{casenv}{enumerate}{4}
 \setlist[casenv]{leftmargin=*,align=left,widest={iiii}}
 \setlist[casenv,1]{label={{\itshape\ \casename} \arabic*.},ref=\arabic*}
 \setlist[casenv,2]{label={{\itshape\ \casename} \roman*.},ref=\roman*}
 \setlist[casenv,3]{label={{\itshape\ \casename\ \alph*.}},ref=\alph*}
 \setlist[casenv,4]{label={{\itshape\ \casename} \arabic*.},ref=\arabic*}
  \theoremstyle{plain}
  \newtheorem{lem}[thm]{\protect\lemmaname}
  \theoremstyle{definition}
  \newtheorem{example}[thm]{\protect\examplename}
  \theoremstyle{remark}
  \newtheorem{rem}[thm]{\protect\remarkname}
\date{}
\DeclareMathOperator{\Hom}{Hom}
\DeclareMathOperator{\End}{End}
\DeclareMathOperator{\tr}{tr}
\DeclareMathOperator{\trace}{trace}
\DeclareMathOperator{\Rad}{Rad}
\DeclareMathOperator{\im}{\mathsf{im}}
\DeclareMathOperator{\Span}{span}
\DeclareMathOperator{\PT}{PT}
\DeclareMathOperator{\PO}{PO}
\DeclareMathOperator{\F}{F}
\DeclareMathOperator{\PF}{PF}
\DeclareMathOperator{\PC}{PC}
\DeclareMathOperator{\IS}{IS}
\DeclareMathOperator{\T}{T}
\DeclareMathOperator{\Lc}{\mathscr{L}}
\DeclareMathOperator{\Jc}{\mathscr{J}}
\DeclareMathOperator{\rank}{rank}
\DeclareMathOperator{\dom}{\mathsf{dom}}
\DeclareMathOperator{\Irr}{\mathsf{Irr}}
\DeclareMathOperator{\IRR}{\mathsf{IRR}}
\DeclareMathOperator{\Stab}{Stab}
\DeclareMathOperator{\Ind}{Ind}
\DeclareMathOperator{\Res}{Res}
\DeclareMathOperator{\Ext}{Ext}
  \providecommand{\corollaryname}{Corollary}
  \providecommand{\examplename}{Example}
  \providecommand{\lemmaname}{Lemma}
  \providecommand{\propositionname}{Proposition}
  \providecommand{\remarkname}{Remark}
 \providecommand{\casename}{Case}
\providecommand{\theoremname}{Theorem}
\begin{document}

\title{The representation theory of the monoid of all partial functions
on a set and related monoids as EI-category algebras}

\author{Itamar Stein\thanks{This paper is part of the author's PHD thesis, being carried out under the supervision of Prof. Stuart Margolis.\\The author's research was supported by Grant No. 2012080 from the United States-Israel Binational Science Foundation (BSF).}}

\affil{ Department of Mathematics\\
    Bar Ilan University\\
    52900 Ramat Gan\\
    Israel}

\maketitle

\begin{abstract}
The (ordinary) quiver of an algebra $A$ is a graph that contains
information about the algebra's representations. We give a description
of the quiver of $\mathbb{C}\PT_{n}$, the algebra of the monoid of
all partial functions on $n$ elements. Our description uses an isomorphism
between $\mathbb{C}\PT_{n}$ and the algebra of the epimorphism category,
$E_{n}$, whose objects are the subsets of $\{1,\ldots, n\}$ and morphism
are all total epimorphisms. This is an extension of a well known isomorphism
of the algebra of $\IS_{n}$ (the monoid of all partial injective maps
on $n$ elements) and the algebra of the groupoid of all bijections
between subsets of an $n$-element set. The quiver of the category algebra is described
using results of Margolis, Steinberg and Li on the quiver of EI-categories. We use the same technique to compute the quiver of other
natural transformation monoids. We also show that the algebra $\mathbb{C}\PT_{n}$ has three blocks
for $n>1$ and we give a natural description of the descending Loewy
series of $\mathbb{C}\PT_{n}$ in the category form. 
\end{abstract}
\Yvcentermath1

{\bf Keywords:} Monoid algebras, Quivers, EI-categories.

\section{Introduction}

One of the goals of the study of monoid representations is to relate
them to the modern representation theory of associative algebras.
Given a (finite) monoid $M$, it is of interest to study the properties
of its algebra $\mathbb{C}M$ (all representations in this paper are
over the field of complex numbers, $\mathbb{C}$). For instance, monoids
for which $\mathbb{C}M$ is semisimple are characterized in \cite[Chapter 5]{Clifford1961}
and monoids for which $\mathbb{C}M$ is basic are characterized in
\cite{Almeida2009} (along with some other natural types of algebras).
Another important invariant of an associative algebra is its ordinary
quiver. Saliola \cite{Saliola2007} described the quiver of left regular
bands and Denton, Hivert, Schilling and Thi{\'e}ry \cite{Denton2010} described the quiver of $\Jc$-trivial monoids. These results were generalized by Margolis and Steinberg in \cite{Margolis2012}
where they described the quiver of a class of monoids called rectangular
monoids that includes properly all the finite monoids whose algebras
are basic. By a description, we mean that they reduced the computation
of the quiver to a problem in the representation theory of the maximal
subgroups of the monoid. They also characterized \cite{Margolis2011}
regular monoids whose algebras are directed or co-directed with respect to their natural
quasihereditary structure. That means that the quiver has only arrows going upwards or downwards with
respect to the partial order on vertices induced from the $\Jc$-order of the monoid. Apart from that, not much
is currently known about quivers of monoid algebras. The case of classical
transformation monoids is clearly of interest. Since the algebra of
$\IS_{n}$ (the symmetric inverse monoid) is semisimple, its quiver
has no arrows. However, for the monoids of partial and total functions
on $n$ elements, denoted $\PT_{n}$ and $\T_{n}$, we get an interesting
question. Putcha \cite{Putcha1996} observed that $\mathbb{C}\PT_{n}$
is co-directed (if we multiply from right to left), that means that
all the arrows in the quiver are goings downwards. He \cite{Putcha1998}
also computed the quiver of $\mathbb{C}\T_{n}$ up to $n=4$ and partially computed
the quiver for $n>4$. This was enough to deduce that $\mathbb{C}\T_{n}$
has two blocks for $n>3$ and that it is not of finite representation
type for $n>4$. We remark that Ponizovski \cite{Ponizovski1987}
proved that $\mathbb{C}\T_{n}$ is of finite representation type for
$n\leq3$ and Ringel \cite{Ringel2000} proved the same for $n=4$
along with finding relations for the quiver presentation. Recently,
Steinberg \cite{Steinberg2015} showed that the quiver of $\mathbb{C}\T_{n}$
is acyclic and that the global dimension of $\mathbb{C}\T_{n}$ is
$n-1$.

\paragraph{}

In this paper we give a full description of the quiver of $\mathbb{C}\PT_{n}$.
It is known \cite{Steinberg2006} that there is an isomorphism between
$\mathbb{C}\IS_{n}$ and the groupoid algebra of the groupoid of bijections
on an $n$-element set. We extend this isomorphism to an isomorphism
between $\mathbb{C}\PT_{n}$ and the algebra of the epimorphism category
$E_{n}$, that is, objects are subsets of $\{1,\ldots, n\}$ and the
morphisms are all total epimorphisms. The infinite version of this category was also studied
in \cite[Chapter 8]{Sam2014}. $E_{n}$ is not a groupoid, but is
an EI-category, that is, any endomorphism is an isomorphism. A
formula for computing the quiver of skeletal EI-categories was
found independently by Margolis and Steinberg \cite[Theorem 6.13]{Margolis2012}
and Li \cite{Li2011}. We use their formula to get a description of
the quiver by means of representations of the symmetric group. We
then use standard tools from the theory of representations of the
symmetric group to get a combinatorial description of the number of
arrows between any two vertices. We also deduce that the algebra $\mathbb{C}\PT_{n}$
has three blocks (for $n>1)$ and we give a natural description for
the Loewy series of $\mathbb{C}\PT_{n}$ in the category form. We
then compute the quiver of the algebras  of other natural transformation monoids related to
$\PT_{n}$: The monoid of all order-preserving
partial functions, the monoid of all order-decreasing partial functions, the monoid of all order-decreasing total
functions, and the partial Catalan monoid which is the intersection of the first two.

\section{Preliminaries}

\subsection{Representations}

Recall that an \emph{algebra} over a field $K$ is a ring $A$ that
is also a vector space over $K$ such that $k(ab)=(ka)b=a(kb)$ for
all $k\in K$ and $a,b\in A$. We will consider only unital, finite dimensional
$\mathbb{C}$-algebras, that is, $K$ will always be the field of
complex numbers and $A$ will be a unital ring that has finite dimension as a $\mathbb{C}$-vector
space. A \emph{representation} of $A$ is a $\mathbb{C}$-algebra
homomorphism $\rho:A\rightarrow\End_{\mathbb{C}}(V)$ where $V$ is
some (finite dimensional) vector space over $\mathbb{C}$. Equivalently we
can say that a representation of $A$ is a (finitely generated, left)
module over $A$. A non-zero representation $M$ is called \emph{irreducible}
or \emph{simple} if it does not have proper submodules other than
$0$. The set of all irreducible representations of $A$ is denoted
$\Irr A$. A representation $M$ is called \emph{semisimple} if it
is a direct sum of simple modules. The algebra $A$ is called \emph{semisimple}
if any $A$-module is semisimple. We denote by $\Rad M$ the \emph{radical} of a module $M$, which is the minimal submodule such that $M/\Rad M$ is a semisimple module. The radical of $A$ is its radical as a left module over itself, which is also the minimal ideal such that $A/\Rad A$ is a semisimple algebra. It is
well known that $\Rad A$ is also the maximal nilpotent ideal of $A$
and the intersection of all maximal ideals of $A$. Clearly $A$ is
semisimple if and only if $\Rad A=0$. The \emph{descending Loewy
series} of a module $M$ is the decreasing sequence of submodules
\[
0\subsetneq\ldots\subsetneq\Rad^{2}M\subsetneq\Rad M\subsetneq M
\]
and the minimal integer $n$ such that $\Rad^n M=0$ is called the \emph{Loewy length} of $M$.

Recall that a non-zero idempotent $e\in A$ is called \emph{primitive} if
the existence of idempotents $f,f^{\prime}\in A$ such that $f+f^{\prime}=e$
and $ff^{\prime}=f^{\prime}f=0$ implies that $f=e$ and $f^{\prime}=0$
(or vice versa). It is known that any irreducible module $N$ is isomorphic
to $Ae/\Rad(Ae)$ for some primitive idempotent $e$.

Recall that the \emph{ordinary quiver} $Q$ of a finite dimensional
algebra $A$ is a directed graph defined in the following
way: The vertices of $Q$ are in a one-to-one correspondence with
the irreducible representations of $A$ (up to isomorphism). If $N_{i}$
and $N_{j}$ are irreducible representations of $A$ (identified with
two vertices of the quiver) then the number of arrows from $N_{i}$
to $N_{j}$ is

\[
\dim\Ext^{1}(N_{i},N_{j})
\]

which is known to be equal to

\[
\dim e_{j}(\Rad A/\Rad^{2}A)e_{i}
\]
where $e_{i},e_{j}$ are primitive idempotents corresponding to $N_{i}$
and $N_{j}$ (this number is independent of the specific choice of
idempotents). Note that an algebra $A$ is semisimple if and only
if its quiver has no arrows at all. More about representations of algebras
and quivers can be found in \cite{Assem2006}.

\subsection{Monoids and monoid representations}

Throughout this paper, all monoids are assumed to be finite. Two elements
$a,b$ of a finite monoid $M$ are \emph{$\mathscr{J}$-equivalent}
if they generate the same principal ideal, that is
\[
a\Jc b\Leftrightarrow MaM=MbM.
\]

A group $H$ which is a subsemigroup of $M$ is called a \emph{subgroup}
of $M$ (but $M$ and $H$ do not necessarily have the same unit element).
A $\Jc$-class is called \emph{regular} if it contains an idempotent.
It is clear that any subgroup $H\subseteq M$ is contained in some
regular $\Jc$-class. It is well known that in a finite monoid any
two maximal subgroups in the same $\Jc$-class are isomorphic. We
denote by $\T_{n}$ and $\PT_{n}$ the monoids of all total and partial
functions on $n$ elements, respectively. $\IS_{n}$ denotes the monoid
of all injective partial functions on $n$ elements. These monoids are fundamental
in monoid theory. For instance, \cite{Ganyushkin2009b} is solely
devoted to their study. Note that $\PT_{n}$ and $\IS_{n}$ are partially
ordered by containment of relations. For other basics of semigroup
theory the reader is referred to \cite{Howie1995}.

We denote by $\mathbb{C}M$ the \emph{monoid algebra} of $M$ over $\mathbb{C}$,
which is the $\mathbb{C}$-vector space of formal sums $\{\sum\alpha_{i}m_{i}\mid m_{i}\in M\}$
where the multiplication is induced by the multiplication of $M$.
Since $M$ is a finite monoid, $\mathbb{C}M$ is an associative, unital
and finite dimensional algebra. Clearly $\dim\mathbb{C}M=|M|$. A $\mathbb{C}M$-module is also called an $M$-module. We also denote the set of irreducible $M$-representations by $\Irr M$.

The case where $M=G$ is a group is of special importance. Maschke's
theorem says that $\mathbb{C}G$ is always a semisimple algebra and
it is known that the number of irreducible $\mathbb{C}G$-modules
equals the number of conjugacy classes of $G$. We denote the trivial
representation of any group $G$ by $\tr_{G}$. Recall that if $V$
is a $G$-representations, then $V^{\ast}=\Hom(V,\mathbb{C})$ is
also a $G$-representation with operation $(g\cdot\varphi)(v)=\varphi(g^{-1}v)$.
The \emph{character} of a representation $\rho:\mathbb{C}G\to\End V$
is the function $\chi_{\rho}:G\to\mathbb{C}$ defined by $\chi_{\rho}(g)=\trace\rho(g)$.
It is well known that two $G$-representations $\rho$ and $\psi$
are isomorphic if and only if $\chi_{\rho}=\chi_{\psi}.$ Moreover,
if $S_{i}$, $1\leq i\leq r$ are the irreducible modules of $\mathbb{C}G$
with characters $\chi_{i}$, $1\leq i\leq r$, and the decomposition
of a $\mathbb{C}G$-module $N$ is
\[
N=\bigoplus_{i=1}^{r}a_{i}S_{i}
\]
then 
\[
a_{i}=\langle\chi_{N},\chi_{i}\rangle=\frac{1}{|G|}\sum_{g\in G}\chi_{N}(g)\overline{\chi_{i}(g)}.
\]

In order to simplify notation, we sometimes omit the $\chi$ and write
$N$ also for the character of $N$.

Let $H\subseteq G$ be a subgroup of $G$ and let $N$ and $L$ be
modules of $G$ and $H$, respectively. We denote by $\Res_{H}^{G}N$
and $\Ind_{H}^{G}L$ the \emph{restriction} and \emph{induction} modules.
We also recall the Frobenius reciprocity theorem which states that
\[
\langle\Ind_{H}^{G}L,N\rangle=\langle N,\Res_{H}^{G}L\rangle
\]
where here $M$, $N$, $\Ind_{H}^{G}L$ and $\Res_{H}^{G}L$ are the respective
characters. Recall that in the special case where $G=S_{n}$ is the
symmetric group, the irreducible representations correspond to partitions
of $n$ (or equivalently, to Young diagrams with $n$ boxes). We use
the standard notation for partitions, that is, a partition $\alpha_{1}+\ldots+\alpha_{k}=n$
is denoted $[\alpha_{1},\ldots,\alpha_{k}]$ where $\alpha_{i}\geq\alpha_{i+1}$.
Recall that the partition $[n]$ corresponds to the trivial representation
and $[1^{n}]=[1,\ldots,1]$ corresponds to the sign representation.
More about the basics of group representations can be found in \cite[Chapters 5-6]{Alperin1995}
and \cite{Sagan2001}.

Returning to the general case of monoid representations. We recall
the fundamental theorem of Munn-Ponizovski (see \cite[Thm 7]{Ganyushkin2009a}
for a modern proof).
\begin{thm}[Munn-Ponizovski]
\label{thm:MunnPonizovski} Let $M$ be a monoid and let $H_{1},\cdots,H_{n}$
be representatives of its maximal subgroups, one for every regular
$\Jc$-class. There is a one-to-one correspondence between irreducible
representations of $M$ and the irreducible representations of $H_{1},\cdots,H_{n}$.
\[
\Irr\mathbb{C}M\leftrightarrow{\displaystyle \bigsqcup_{k=1}^{n}}\Irr\mathbb{C}H_{k}.
\]

\end{thm}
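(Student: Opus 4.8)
The plan is to analyze $\mathbb{C}M$ through the ideal structure induced by the $\Jc$-order and to reduce, one regular $\Jc$-class at a time, to group representation theory. First I would invoke the general fact that for any finite dimensional algebra $A$ the simple modules are precisely the simple modules of the semisimple quotient $A/\Rad A$, so that producing the asserted bijection is the same as matching $\Irr M$ with $\bigsqcup_{k}\Irr\mathbb{C}H_{k}$. To organize this, fix a linear extension of the $\Jc$-order on the $\Jc$-classes of $M$ and form the corresponding chain of two-sided ideals of $\mathbb{C}M$ whose successive quotients are spanned by a single $\Jc$-class $J$ together with the zero of the principal factor. By Rees's theorem, when $J$ is regular this quotient is the contracted algebra of a completely $0$-simple Rees matrix semigroup over the maximal subgroup $H=H(J)$, whereas when $J$ is non-regular the quotient is a nilpotent (``null'') algebra.

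Next I would attach to each simple $M$-module $V$ its \emph{apex}: the unique $\Jc$-class $J$ that acts nonzero on $V$ while every $\Jc$-class strictly below it acts as zero. The null factors act nilpotently, so simplicity of $V$ forces the apex to be a regular $\Jc$-class. This partitions $\Irr M$ according to apex, and it is exactly this partition that will produce the disjoint union $\bigsqcup_{k}$ indexed by the regular $\Jc$-classes.

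It then remains to show, for a fixed regular $\Jc$-class $J$ with idempotent $e$ and maximal subgroup $H$ equal to the group of units of the monoid $eMe$, that the simple $M$-modules with apex $J$ are in bijection with $\Irr\mathbb{C}H$. The natural device is the truncation functor $V\mapsto eV$, which carries a simple $M$-module of apex $J$ to a $\mathbb{C}H$-module via $h\cdot(ev)=h(ev)$; one checks $eV\neq 0$ precisely because the apex equals $J$. In the reverse direction an induction/inflation construction built from the Rees coordinates turns a simple $\mathbb{C}H$-module into a simple $M$-module of apex $J$, and the two assignments are mutually inverse on isomorphism classes; equivalently the truncated algebra satisfies $e\mathbb{C}Me\cong\mathbb{C}H$ modulo radical, so the $J$-layer is Morita equivalent to $\mathbb{C}H$ after passing to the radical quotient.

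The main obstacle is that the Rees matrix algebra appearing in the $J$-layer need not be semisimple, since its structure (sandwich) matrix may be singular over $\mathbb{C}H$; hence one cannot simply read the simples off the Rees coordinates. The heart of the argument is therefore the idempotent-truncation step: showing that $V\mapsto eV$ loses no simple module into the sandwich-matrix radical and keeps distinct $\mathbb{C}H$-simples distinct. This is where regularity of $J$ is used decisively, since it guarantees the idempotent $e$ (and with it the group $H$ of units of $eMe$) whose presence makes $e$ behave as a full idempotent modulo the relevant radical.
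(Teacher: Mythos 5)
The paper offers no proof of this theorem --- it is recalled as a classical result with a citation to Ganyushkin--Mazorchuk--Steinberg for a modern proof --- and your outline is exactly the standard Clifford--Munn--Ponizovskii argument given in that reference (principal series with null and completely $0$-simple layers, existence, uniqueness and regularity of the apex, and the idempotent truncation $V\mapsto eV$ inverse to induction from $\mathbb{C}H$), so it is correct in outline and matches the intended proof. One aside is imprecise: $e\mathbb{C}Me=\mathbb{C}(eMe)$ is generally \emph{not} isomorphic to $\mathbb{C}H$ modulo its radical (e.g.\ $M=\PT_{n}$, $e=1$, $H=S_{n}$); the correct and sufficient statement, which your main argument in fact already uses, is that $e$ truncates the contracted algebra of the principal factor $J^{0}$ to $\mathbb{C}H$ on the nose, since $eMe\cap J=H$.
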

As a result, we get a partial ordering of the irreducible modules.
Let $N_{1}$ and $N_{2}$ be irreducible $M$-modules, which correspond
to the $H_{i_{1}}$ and $H_{i_{2}}$-modules $V_{1}$ and $V_{2}$,
respectively. Then $N_{1}\leq N_{2}$ if $H_{i_{1}}\leq_{\Jc}H_{i_{2}}$.
Now, let $Q$ be the quiver of $\mathbb{C}M$. Since the vertices
of the quiver are in a one-to-one correspondence with the irreducible
representations of $M$ we get a partial ordering of the vertices
of $Q$ as well.

\subsection{Categories}

All categories in this paper are finite. Hence we can regard a category
$A$ as a set of objects denoted $A^{0}$, and a set of morphisms
denoted $A^{1}$. If $a,b\in A^{0}$ then $A(a,b)$ is the set of
morphisms from $a$ to $b$. A category is called a \emph{groupoid}
if any morphism is an isomorphism and an \emph{EI-category} if every
endomorphism is an isomorphism. If $A$ is a category, we can define
the \emph{category algebra} $\mathbb{C}A$ consisting of all linear
combinations of morphisms with obvious addition and multiplication.
Recall that if $f\in A(c,d)$ and $g\in A(a,b)$ are morphisms such
that $b\neq c$ then $fg=0$ in the category algebra. It is well known
that groupoid algebras are semisimple \cite[Section 3]{Steinberg2006}.
In fact, it is not difficult to check that an EI-category algebra $\mathbb{C}A$
is semisimple if and only if $A$ is a groupoid.

A morphism $f\in A^{1}$ is called \emph{irreducible} if it is not
left or right invertible but whenever $f=gh$, either $g$ is left
invertible or $h$ is right invertible. The set of irreducible morphisms
from $a$ to $b$ is denoted $\IRR A(a,b)$. Note that if $A$ is
an EI-category, a morphism $f$ is left or right invertible if and
only if it is an isomorphism. Indeed, let $f\in A(a,b)$ be a morphism
such that $fg=1_{b}$ for some $g\in A(b,a)$. Then $gf\in A(a,a)$
is an isomorphism so $f$ is left invertible as well. Hence, $f$
is irreducible if it is not an isomorphism and whenever $f=gh$, either
$g$ or $h$ is an isomorphism. Recall that two categories $A$ and $B$
are \emph{equivalent} if there is a fully faithful and essentially
surjective functor $F:A\to B$. Note that any category $A$ is equivalent
to some skeletal category $B$ (where \emph{skeletal} means that no
two objects of $B$ are isomorphic). If $A$ and $B$ are equivalent
categories then their algebras are Morita equivalent \cite[Proposition 2.2]{Webb2007}.

\subsection{M\"{o}bius functions}

Let $(X,\leq)$ be a finite poset. We view $\leq$ as a set of ordered pairs. The \emph{M\"{o}bius function} of $\leq$
is a function $\mu:\leq\to\mathbb{C}$ that can be defined in the
following recursive way:
\[
\mu(x,x)=1
\]
\[
\mu(x,y)=-\sum_{x\leq z<y}\mu(x,z)
\]

\begin{thm}[M\"{o}bius inversion theorem]

Let $V$ be a $\mathbb{C}$-vector space and let $f,g:X\to V$ be
functions such that
\[
g(x)=\sum_{y\leq x}f(y)
\]
then
\[
f(x)=\sum_{y\leq x}\mu(y,x)g(y).
\]
\end{thm}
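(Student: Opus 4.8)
The plan is to substitute the defining relation for $g$ into the claimed formula for $f$ and then interchange the order of summation, reducing the whole statement to a single combinatorial identity for the M\"{o}bius function. First I would write, for a fixed $x$,
\[
\sum_{y\leq x}\mu(y,x)g(y)=\sum_{y\leq x}\mu(y,x)\sum_{z\leq y}f(z)=\sum_{z\leq x}\Bigl(\sum_{z\leq y\leq x}\mu(y,x)\Bigr)f(z),
\]
where in the last step I have collected, for each $z\leq x$, the total coefficient of $f(z)$. Thus the theorem is equivalent to the assertion that the inner sum equals the Kronecker delta $\delta_{z,x}$, so that only the term $z=x$ survives on the right and we are left with $f(x)$.

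The main obstacle is that the recursion defining $\mu$ gives directly only the \emph{left-anchored} identity $\sum_{x\leq z\leq y}\mu(x,z)=\delta_{x,y}$ (which is just a restatement of $\mu(x,x)=1$ together with $\mu(x,y)=-\sum_{x\leq z<y}\mu(x,z)$), whereas the sum above is \emph{right-anchored}, namely $\sum_{z\leq y\leq x}\mu(y,x)$, summing the left argument with the right argument held fixed. To pass from one form to the other I would use the incidence algebra of $(X,\leq)$: equip the space of functions on the set $\leq$ with the convolution $(\alpha*\beta)(a,b)=\sum_{a\leq c\leq b}\alpha(a,c)\beta(c,b)$, which is associative and has the Kronecker delta $\delta$ as two-sided identity. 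Letting $\zeta$ be the function constant equal to $1$ on $\leq$, the defining recursion says exactly that $\mu*\zeta=\delta$. Since $\zeta$ takes the value $1$ on every diagonal pair $(a,a)$, it is unitriangular with respect to a linear extension of $\leq$ and hence invertible in this finite-dimensional algebra; and in a unital associative algebra a one-sided inverse of an invertible element is automatically the two-sided inverse, so $\mu=\zeta^{-1}$ and therefore $\zeta*\mu=\delta$ as well. Reading off the entry at $(z,x)$ of the equation $\zeta*\mu=\delta$ yields precisely $\sum_{z\leq y\leq x}\mu(y,x)=\delta_{z,x}$, which is what the computation above requires.

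Combining the two steps gives $\sum_{y\leq x}\mu(y,x)g(y)=f(x)$, as claimed. If one prefers to avoid the incidence algebra, the right-anchored identity can instead be established directly by induction on the cardinality of the interval $[z,x]$, with base case $z=x$ given by $\mu(x,x)=1$; the inductive bookkeeping is routine but essentially reconstructs the invertibility argument, so I would present the incidence-algebra version as the conceptually cleanest proof and flag the left-/right-anchored duality as the one point that genuinely needs justification rather than being read off the definition.
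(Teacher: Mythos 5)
Your proof is correct, but note that the paper does not actually prove this statement: it is recalled in the preliminaries as a standard fact (with a pointer to Stanley's book), so there is no in-paper argument to compare against. What you give is essentially the standard incidence-algebra proof, and you have correctly isolated the one genuine subtlety that a careless substitution argument glosses over --- the defining recursion yields only the left-anchored identity $\sum_{x\leq z\leq y}\mu(x,z)=\delta_{x,y}$, i.e.\ $\mu*\zeta=\delta$, whereas after interchanging the order of summation the inversion formula requires the right-anchored identity $\sum_{z\leq y\leq x}\mu(y,x)=\delta_{z,x}$, i.e.\ $\zeta*\mu=\delta$, and passing between the two needs associativity together with invertibility of $\zeta$. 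The only step I would ask you to make fully explicit is why the inverse of $\zeta$ lies in the incidence algebra itself (invertibility as a triangular matrix is not literally the same as invertibility in the subalgebra of functions supported on the relation $\leq$): either observe that $\zeta^{-1}=\sum_{k\geq 0}\bigl(-(\zeta-\delta)\bigr)^{k}$ is a finite sum because $\zeta-\delta$ is nilpotent, and that powers of $\zeta-\delta$ remain supported on $\leq$ by transitivity; or construct a right inverse of $\zeta$ directly by the dual recursion and identify it with $\mu$ via associativity. With that detail filled in the argument is complete, and it is consistent with how the paper actually uses the theorem later, in the proof that $\mathbb{C}\PT_{n}\cong\mathbb{C}E_{n}$, where both the left-anchored identity and the inversion formula appear.
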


More on M\"{o}bius functions can be found in \cite[Chapter 3]{Stanley1997}.
Important applications of M\"{o}bius functions to the representation theory
of finite monoids can be found in \cite{Steinberg2006} \cite{Steinberg2008}.

\section{The quiver of $\mathbb{C}\PT_{n}$}

Let $\PT_{n}$ be the monoid of all partial functions on $n$
elements. The goal of this section is to describe the quiver of the
algebra of $\PT_{n}$. We remark that in this paper we compose functions
from right to left. It is well known that $t\Jc s$ in $\PT_{n}$
if and only if $\rank s=\rank t$, that is, $|\im t|=|\im s|$. Hence
the $\Jc$-classes of $\PT_{n}$ are linearly ordered by rank. It
is also known that all the $\Jc$-classes are regular and the maximal
subgroup of the $\Jc$-class of rank $k$ is $S_{k}$. By \thmref{MunnPonizovski},
there is a one-to-one correspondence between $\Irr\PT_{n}$ and ${\displaystyle \bigsqcup_{k=0}^{n}}\Irr S_{k}$.
Since the irreducible representations of $S_{k}$ correspond to Young
diagrams with $k$ boxes (or partitions of $k$) it follows that the
vertices of the quiver of $\mathbb{C}\PT_{n}$ correspond to the Young
diagrams with $k$ boxes for $0\leq k\leq n$. For instance, we can
identify the vertices of the quiver of $\PT_{3}$ with the following
diagrams:

\begin{center} \begin{tikzpicture}\path (-2,3) node (S3_1) {$\yng(3)$}; \path (0,3) node (S3_2) {$\yng(2,1)$}; \path (2,3) node (S3_3) {$\yng(1,1,1)$}; \path (-1,1.5) node (S2_1) {$\yng(2)$}; \path (1,1.5) node (S2_2) {$\yng(1,1)$}; \path (0,0) node (S1_1) {$\yng(1)$}; \path (0,-1.5) node (S0_1) {$\emptyset$};\end{tikzpicture} \end{center}

We have ordered the diagrams according to the partial order on vertices
mentioned above. Hence, representations of $S_{r}$ appear above representations
of $S_{k}$ if $r>k$.

\paragraph{}

We will now show a way to describe the number of arrows between any
two Young diagrams in the quiver. We start by recalling a result from
\cite{Steinberg2006}. Denote by $G_{n}$ the category whose objects
are subsets of $\overline{n}=\{1,\ldots, n\}$ and morphisms are in
one-to-one correspondence with elements of $\IS_{n}$. For every $t\in\IS_{n}$
there is a morphism $G_{n}(t)$ from $\dom t$ to $\im t$, so multiplication
$G_{n}(s)G_{n}(t)$ is defined where $\im(t)=\dom(s)$ and the result
is $G_{n}(st)$. In other words, $G_{n}$ is the category of all bijections
between subsets of an $n$-element set. Note that $G_{n}$ is a groupoid. Note also
that restriction of functions (or containment of relations) is a partial
order on $\IS_{n}$ that turns $\IS_{n}$ into a partially ordered
monoid. We refer to this order as the natural order on $\IS_{n}$,
as it is a special case of the natural ordering of any inverse semigroup
(see \cite[Section 7.1]{Clifford1967} or \cite[Section 5.2]{Howie1995}).
The following theorem is \cite[Theorem 4.2]{Steinberg2006} for the
special case of the symmetric inverse monoid.
\begin{thm}
$\mathbb{C}\IS_{n}$ is isomorphic to $\mathbb{C}G_{n}$. Explicit
isomorphisms $\varphi:\mathbb{C}\IS_{n}\rightarrow\mathbb{C}G_{n}$,
$\psi:\mathbb{C}G_{n}\rightarrow\mathbb{C}\IS_{n}$ are defined (on
basis elements) by

\[
\varphi(s)=\sum_{t\leq s}G_{n}(t)
\]

\[
\psi(G_{n}(s))=\sum_{t\leq s}\mu(t,s)t
\]

where $\leq$ is the standard partial order on $\IS_{n}$ and $\mu$
is its M\"{o}bius function.
\end{thm}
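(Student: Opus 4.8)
The plan is to verify directly that $\varphi$ is a unital algebra isomorphism whose inverse is exactly $\psi$. Since $\varphi$ is defined by a clean zeta-type sum while $\psi$ carries the M\"obius coefficients, it is more economical to check multiplicativity for $\varphi$ and then obtain it for $\psi$ for free as the inverse map.

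First I would dispose of the purely linear-algebraic part. Both $\{s : s \in \IS_n\}$ and $\{G_n(t) : t \in \IS_n\}$ are bases of the respective algebras, indexed by the same set $\IS_n$, and $\varphi$ sends $s$ to the sum of $G_n(t)$ over the principal order ideal $\{t \leq s\}$. This is precisely the setup of the M\"obius inversion theorem: the matrix of $\varphi$ in these bases is the zeta function of $(\IS_n, \leq)$, and the displayed formula for $\psi$ is its M\"obius inverse. A one-line computation using the defining recursion $\sum_{r \leq t \leq s}\mu(r,t) = \delta_{r,s}$ gives $\psi\varphi = \varphi\psi = \id$, so $\varphi$ and $\psi$ are mutually inverse linear isomorphisms. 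It then suffices to prove that $\varphi$ is multiplicative and unital.

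The crux is the following combinatorial identity, which I would isolate as the main lemma. Expanding $\varphi(s_1)\varphi(s_2) = \sum_{t_1 \leq s_1,\, t_2 \leq s_2} G_n(t_1)G_n(t_2)$ and recalling that $G_n(t_1)G_n(t_2)$ equals $G_n(t_1 t_2)$ when $\im t_2 = \dom t_1$ and vanishes otherwise, the coefficient of a fixed $G_n(u)$ on the right-hand side is the number of pairs $(t_1,t_2)$ with $t_1 \leq s_1$, $t_2 \leq s_2$, $\im t_2 = \dom t_1$ and $t_1 t_2 = u$. On the other side $\varphi(s_1 s_2) = \sum_{u \leq s_1 s_2} G_n(u)$ has coefficient $1$ exactly when $u \leq s_1 s_2$. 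So everything reduces to showing that there is precisely one such pair when $u \leq s_1 s_2$ and none otherwise. For existence and uniqueness I would use the canonical factorization: set $t_2 = s_2|_{\dom u}$ and $t_1 = s_1|_{\im t_2}$. Because $\dom u \subseteq \dom(s_1 s_2)$ forces $s_2(\dom u) \subseteq \dom s_1$, these are well defined, composable ($\dom t_1 = \im t_2$), and satisfy $t_1 t_2 = (s_1 s_2)|_{\dom u} = u$. Conversely, if $(t_1,t_2)$ is any admissible pair, the matching condition $\im t_2 = \dom t_1$ means no points are lost in the composite, so $\dom t_2 = \dom(t_1 t_2) = \dom u$; since $t_2 \leq s_2$ this pins down $t_2 = s_2|_{\dom u}$, hence $\im t_2$, hence $\dom t_1$, hence $t_1 = s_1|_{\dom t_1}$, and the factorization is unique. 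When $u \not\leq s_1 s_2$, no pair can exist because the natural order is compatible with multiplication (as noted in the text, $\IS_n$ is a partially ordered monoid), whence $t_1 t_2 \leq s_1 s_2$ would give $u \leq s_1 s_2$, a contradiction.

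Granting the lemma, $\varphi(s_1 s_2) = \varphi(s_1)\varphi(s_2)$ follows by comparing coefficients, and unitality is immediate: the identity of $\mathbb{C}\IS_n$ is the full identity $1$, whose restrictions are the partial identities $1_A$, so $\varphi(1) = \sum_A G_n(1_A)$ is exactly the sum of the object-identities, i.e.\ the unit of $\mathbb{C}G_n$. Thus $\varphi$ is a unital algebra isomorphism and $\psi = \varphi^{-1}$. I expect the main obstacle to be the uniqueness half of the factorization lemma: the two products behave differently (in $\IS_n$ the composite $s_1 s_2$ always exists but may have a strictly smaller domain, whereas in the groupoid $G_n$ a product vanishes unless images and domains match exactly), and the content of the theorem is precisely that summing over order ideals reconciles this mismatch. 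Keeping careful track of how the matching condition $\im t_2 = \dom t_1$ prevents any domain shrinkage is the step that makes the bijection between admissible pairs and the ideal $\{u \leq s_1 s_2\}$ work.
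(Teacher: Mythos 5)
Your proof is correct, and it follows essentially the same route the paper takes for the analogous result on $\PT_{n}$ (Proposition \ref{prop:MainIsomorphism}): Möbius inversion for mutual invertibility, then a canonical-factorization argument showing each $G_{n}(u)$ with $u\leq s_{1}s_{2}$ arises from exactly one composable pair $(t_{1},t_{2})=(s_{1}|_{\im t_{2}},\,s_{2}|_{\dom u})$. (For this particular theorem the paper simply cites Steinberg; your uniform counting argument folds the paper's two cases, $\dom t=\im s$ versus $\dom t\neq\im s$, into one, but the key idea is identical.)
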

We claim that this isomorphism can be extended into an isomorphism
between $\mathbb{C}\PT_{n}$ and the category of epimorphisms on an
$n$-element set, which we define now. Denote by $E_{n}$ the category whose
objects are subsets of $\overline{n}=\{1,\ldots, n\}$, and whose morphisms
are in one-to-one correspondence with the elements of $\PT_{n}$.
For every $t\in\PT_{n}$ there is a morphism $E_{n}(t)$ from $\dom t$
to $\im t$, so multiplication $E_{n}(s)E_{n}(t)$ is defined where
$\im(t)=\dom(s)$ and the result is $E_{n}(st)$. In other words,
$E_{n}$ is the category of all total onto functions between subsets
of an $n$-element set  (the ``$E$'' stands for epimorphisms). Note that $E_{n}$
is not a groupoid, but it is an EI-category since $E_{n}(X,X)\cong S_{|X|}$
where $X\subseteq\overline{n}$. Furthermore, the groupoid $G_{n}$
discussed above is precisely the groupoid of isomorphisms of the category
$E_{n}$.
\begin{prop}
\label{prop:MainIsomorphism}$\mathbb{C}\PT_{n}$ is isomorphic to
$\mathbb{C}E_{n}$. Explicit isomorphisms $\varphi:\mathbb{C}\PT_{n}\rightarrow\mathbb{C}E_{n}$,
$\psi:\mathbb{C}E_{n}\rightarrow\mathbb{C}\PT_{n}$ are defined (on
basis elements) by

\[
\varphi(s)=\sum_{t\leq s}E_{n}(t)
\]

\[
\psi(E_{n}(s))=\sum_{t\leq s}\mu(t,s)t
\]

where $\leq$ is the natural partial order on $\PT_{n}$ (containment
of relations) and $\mu$ is its M\"{o}bius function.\end{prop}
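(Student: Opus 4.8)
The plan is to mirror the groupoid isomorphism just recalled for $\IS_{n}$ and $G_{n}$, and to verify three things: that $\varphi$ and $\psi$ are mutually inverse \emph{linear} maps, that $\varphi$ is multiplicative, and that $\varphi$ is unital. Since the morphisms $E_{n}(t)$ are indexed by $t\in\PT_{n}$ and form a basis of $\mathbb{C}E_{n}$, we have $\dim\mathbb{C}\PT_{n}=|\PT_{n}|=\dim\mathbb{C}E_{n}$, so for the first point it suffices to check $\psi\circ\varphi=\id$. Applying $\psi$ to $\varphi(s)=\sum_{t\le s}E_{n}(t)$ and interchanging the order of summation gives $\psi(\varphi(s))=\sum_{r\le s}\bigl(\sum_{r\le t\le s}\mu(r,t)\bigr)\,r$, and the defining recursion of the M\"{o}bius function forces the inner sum to equal $1$ when $r=s$ and $0$ otherwise. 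Hence $\psi\varphi=\id$, and $\varphi$, $\psi$ are mutually inverse linear isomorphisms. (Once $\varphi$ is shown to be a bijective algebra homomorphism, its inverse $\psi$ is automatically a homomorphism, so $\psi$ needs no separate treatment.)

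The crux is multiplicativity, $\varphi(ss')=\varphi(s)\varphi(s')$. Expanding the right-hand side gives $\sum_{t\le s,\,t'\le s'}E_{n}(t)E_{n}(t')$, and in the category algebra $E_{n}(t)E_{n}(t')$ is nonzero precisely when $\im t'=\dom t$, in which case it equals $E_{n}(tt')$. Comparing this with $\varphi(ss')=\sum_{u\le ss'}E_{n}(u)$ and using that the morphisms are a basis, the claim reduces to a purely combinatorial statement about partial functions: for every $u\in\PT_{n}$, the number of pairs $(t,t')$ with $t\le s$, $t'\le s'$, $\im t'=\dom t$, and $tt'=u$ is $1$ if $u\le ss'$ and $0$ otherwise.

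I would prove this count by a direct bijection. A restriction $t\le s$ is determined by its domain $A\subseteq\dom s$ via $t=s|_{A}$ (so $\im t=s(A)$), and likewise $t'\le s'$ by some $B\subseteq\dom s'$. The compatibility condition $\im t'=\dom t$ forces $A=s'(B)$, so the pair $(t,t')$ is determined by $B$ alone; moreover the requirement $A=s'(B)\subseteq\dom s$ is exactly the condition $B\subseteq\dom(ss')$, and a short check shows $tt'=(ss')|_{B}$. Since $u=(ss')|_{B}$ recovers $B=\dom u$, the valid pairs correspond bijectively to subsets $B\subseteq\dom(ss')$, each producing a distinct restriction $u\le ss'$ exactly once and yielding no other $u$; this gives the count and hence multiplicativity. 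The main obstacle is precisely this bookkeeping: one must confirm that restricting both factors and then composing agrees with composing first and then restricting, and it is here that the \emph{epimorphism} condition $\im t'=\dom t$ (rather than mere definedness of $tt'$ in $\PT_{n}$) is essential—it is what prevents overcounting. Finally, $\varphi$ is unital, since the identity $\id_{\overline{n}}$ of $\mathbb{C}\PT_{n}$ has as its restrictions exactly the partial identities $\id_{X}$, $X\subseteq\overline{n}$, so $\varphi(\id_{\overline{n}})=\sum_{X}E_{n}(\id_{X})$ is the unit of $\mathbb{C}E_{n}$. Combining the three points, $\varphi$ is a unital bijective algebra homomorphism, hence an isomorphism with inverse $\psi$.
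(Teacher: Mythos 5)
Your proof is correct. Its skeleton --- the M\"{o}bius computation for mutual inverses followed by a counting argument for multiplicativity --- is the same as the paper's, but the multiplicativity step is organized genuinely differently. The paper splits into two cases according to whether the image of the first-applied factor equals the domain of the second: in the composable case it runs an existence-and-uniqueness argument showing that each $E_{n}(u)$ with $u\le ss'$ occurs exactly once in the expanded product, and in the non-composable case it passes to maximal compatible restrictions $\tilde{s},\tilde{s}'$ and checks that all discarded cross terms vanish. You avoid the case split entirely by parametrizing every nonzero term $E_{n}(t)E_{n}(t')$ of the expanded product by the single subset $B=\dom t'$, verifying that the admissible $B$ are exactly the subsets of $\dom(ss')$ and that the corresponding product is $(ss')|_{B}$; this treats both of the paper's cases uniformly and makes the ``exactly once'' count transparent. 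Your economies elsewhere are also legitimate: checking only $\psi\varphi=\id$ and concluding by equality of dimensions, and observing that unitality (which the paper does not verify, though it is automatic for a bijective multiplicative map onto a unital algebra) follows from $\varphi(\id_{\overline{n}})=\sum_{X}E_{n}(\id_{X})$. Both arguments hinge on the point you make explicit: it is the strict condition $\im t'=\dom t$ in the category algebra, not mere definedness of $tt'$ in $\PT_{n}$, that prevents overcounting.
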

\begin{proof}
The proof that $\varphi$ and $\psi$ are bijectives is identical to what
is done in \cite{Steinberg2006}.
\begin{align*}
\psi(\varphi(s)) & =\psi(\sum_{t\leq s}E_{n}(t))=\sum_{t\leq s}\psi(E_{n}(t))\\
 & =\sum_{t\leq s}\sum_{u\leq t}\mu(u,t)u=\sum_{u\leq s}u\sum_{u\leq t\leq s}\mu(u,t)\\
 & =\sum_{u\leq s}u\delta(u,s)=s
\end{align*}
and
\[
\varphi\psi(E_{n}(s))=\varphi(\sum_{t\leq s}\mu(t,s)t)=\sum_{t\leq s}\mu(t,s)\varphi(t)=E_{n}(s)
\]
where the last equality follows from the M\"{o}bius inversion theorem
and the definition of $\varphi$. Hence, $\varphi$ and $\psi$ are
bijectives. We now prove that $\varphi$ is a homomorphism.

Let $t,s\in\PT_{n}$ we have to show that
\begin{equation}
\sum_{h\leq ts}E_{n}(h)=(\sum_{t^{\prime}\leq t}E_{n}(t^{\prime}))(\sum_{s^{\prime}\leq s}E_{n}(s^{\prime}))\label{eq:HomomorphismEqualityOfVarphi}.
\end{equation}

\begin{casenv}
\item \label{case:MainIsomorphismProofCase1}First assume that $\dom t=\im s$.
It is clear that for any element $E_{n}(t^{\prime}s^{\prime})$  on the right hand side of (\ref{eq:HomomorphismEqualityOfVarphi}),
$t^{\prime}s^{\prime}$ is less than or equal to $ts$. So we have only to show that any $E_{n}(h)$
for $h\leq ts$ appears in the right hand side once. If $h\leq ts$
one can take $s^{\prime}=\left.s\right|_{\dom h}$ and $t^{\prime}=\left.t\right|_{\im s^{\prime}}$
and it is clear that $E_{n}(t^{\prime})E_{n}(s^{\prime})=E_{n}(h)$.
So $E_{n}(h)$ appears in the right hand side. Now assume that $E_{n}(t^{\prime})E_{n}(s^{\prime})=E_{n}(h)$
for some $t^{\prime}\leq t$ and $s^{\prime}\leq s$ then $\dom h=\dom t^{\prime}s^{\prime}=\dom s^{\prime}$
so $s^{\prime}$ has to be $\left.s\right|_{\dom h}$ and since $E_{n}(t^{\prime})E_{n}(s^{\prime})\neq0$
we know that $t^{\prime}$ has to be $\left.t\right|_{\im s^{\prime}}$.
So $E_{n}(h)$ appears only once.
\item $\dom t\neq\im s$. Choose $\tilde{s}\leq s$ with maximal domain
such that $\im\tilde{s}\subseteq\dom t$ and define $\tilde{t}=\left.t\right|_{\im\tilde{s}}$.
It is clear that $ts=\tilde{t}\tilde{s}$ and $\dom\tilde{t}=\im\tilde{s}$.
Now,\\
\[
\sum_{h\leq ts}E_{n}(h)=\sum_{h\leq\tilde{t}\tilde{s}}E_{n}(h)
\]
and by \caseref{MainIsomorphismProofCase1}
\[
\sum_{h\leq\tilde{t}\tilde{s}}E_{n}(h)=(\sum_{t^{\prime}\leq\tilde{t}}E_{n}(t^{\prime}))(\sum_{s^{\prime}\leq\tilde{s}}E_{n}(s^{\prime})).
\]
If $s^{\prime}\leq s$ but $s^{\prime}\nleq\tilde{s}$ then $\im s^{\prime}\nsubseteq\dom t$
so $E_{n}(t^{\prime})E_{n}(s^{\prime})=0$ for any $t^{\prime}\leq t$.
On the other hand, if $t^{\prime}\leq t$ but $t^{\prime}\nleq\tilde{t}$
then $\dom t^{\prime}\nsubseteq\im s$ so $E_{n}(t^{\prime})E_{n}(s^{\prime})=0$
for any $s^{\prime}\leq s$. Hence,
\[
(\sum_{t^{\prime}\leq\tilde{t}}E_{n}(t^{\prime}))(\sum_{s^{\prime}\leq\tilde{s}}E_{n}(s^{\prime}))=(\sum_{t^{\prime}\leq t}E_{n}(t^{\prime}))(\sum_{s^{\prime}\leq s}E_{n}(s^{\prime}))
\]
and we get the desired equality.
\end{casenv}
\end{proof}
We now want to describe the quiver of the category algebra $\mathbb{C}E_{n}$ and hence of $\mathbb{C}\PT_{n}$.

Margolis and Steinberg \cite[section 6.3.1]{Margolis2012} and Li
\cite{Li2011} independently described the quiver of skeletal EI-categories
and we use their results here. Note that in $E_{n}$, all sets
of the same cardinality are isomorphic objects. Hence, if we denote
by $SE_{n}$ the full subcategory of $E_{n}$ with the objects $\overline{k}=\{1,\ldots,k\}$
where $1\leq k\leq n$ and $\overline{0}=\emptyset$ then $SE_{n}$
is equivalent to $E_{n}$. This implies that their algebras are Morita
equivalent (see \cite[Proposition 2.2]{Webb2007}) and hence have the
same quiver. So we can switch our attention to finding the quiver
of $\mathbb{C}SE_{n}$. Another way to describe $SE_{n}$ is as the
category with object set $\{\overline{k}\mid0\leq k\leq n\}$ and
$SE_{n}(\overline{k},\overline{r})$ is the set of total onto functions
from $\overline{k}$ to $\overline{r}$. We continue to denote the morphism of $SE_n$ associated to some function $t$ by $E_n(t)$. Note that $SE_{n}$ is a
skeletal EI-category and $SE_{n}(\overline{k},\overline{k})\cong S_{k}$. 
\begin{lem}
\label{lem:ClassificationOfIrreducibleMorphisms}The irreducible morphisms
of $SE_{n}$ are precisely the morphisms from $\overline{k+1}$ to
$\overline{k}$. In other words, 
\[
\IRR SE_{n}(\overline{r},\overline{k})=\begin{cases}
SE_{n}(\overline{r},\overline{k}) & r=k+1\\
\emptyset & \mbox{otherwise}
\end{cases}.
\]
\end{lem}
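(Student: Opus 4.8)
The strategy is to check, case by case on $r$ and $k$, the characterization of irreducibility recorded in the Categories subsection: in an EI-category a morphism $f$ is irreducible exactly when it is not an isomorphism and in every factorization $f=gh$ one of $g,h$ is an isomorphism. I will use two elementary facts about surjections: a surjection $\overline{a}\to\overline{b}$ exists if and only if $a\ge b$, and such a surjection is an isomorphism of $SE_{n}$ if and only if $a=b$ (an endomorphism of $\overline{k}$ is a self-surjection, hence a bijection, hence an element of $S_{k}$). The cases $r\le k$ are immediate: if $r<k$ there are no morphisms at all, and if $r=k$ every morphism of $SE_{n}(\overline{k},\overline{k})$ is an isomorphism and so, by definition, not irreducible. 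It therefore remains to treat $r=k+1$ and $r>k+1$.

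First I would show that every $f\colon\overline{k+1}\to\overline{k}$ is irreducible. Such an $f$ is not an isomorphism. Suppose $f=gh$ with $h\colon\overline{k+1}\to\overline{m}$ and $g\colon\overline{m}\to\overline{k}$. Surjectivity of $h$ and of $g$ forces $k+1\ge m$ and $m\ge k$, hence $m\in\{k,k+1\}$. If $m=k$ then $g$ is a self-surjection of $\overline{k}$, and if $m=k+1$ then $h$ is a self-surjection of $\overline{k+1}$; in either case that factor is an isomorphism. Thus no factorization of $f$ avoids an isomorphism, and $f$ is irreducible.

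The remaining inclusion is the only part that needs a construction: I must exhibit, for each $f\colon\overline{r}\to\overline{k}$ with $r>k+1$, a factorization $f=gh$ in which neither factor is an isomorphism. (Note $k\ge1$ here, since a surjection onto $\emptyset$ forces an empty domain, so in fact $r\ge k+2$.) The idea is to factor through $\overline{k+1}$. The fibers $f^{-1}(1),\dots,f^{-1}(k)$ partition $\overline{r}$ into $k$ nonempty blocks, and since $r\ge k+2$ at least one block has two or more elements; splitting such a block into two nonempty parts produces a partition of $\overline{r}$ with exactly $k+1$ blocks refining the fiber partition of $f$. Let $h\colon\overline{r}\to\overline{k+1}$ be a surjection whose fibers are these $k+1$ blocks, and let $g\colon\overline{k+1}\to\overline{k}$ be the map induced on fibers, sending each $h$-fiber to the $f$-value of the $f$-fiber containing it. Then $gh=f$, while $h$ is a surjection with $r>k+1$ and $g$ is a surjection with $k+1>k$, so neither is an isomorphism; hence $f$ is not irreducible.

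The one point to get right is this middle construction. One must confirm that the refinement can always be arranged to have precisely $k+1$ blocks — this is exactly where the hypothesis $r\ge k+2$ is used, guaranteeing a block of size at least two to split — and that $g$ is well defined and onto, which holds because each refined block lies inside a single $f$-fiber and $f$ is surjective. Everything else is cardinality bookkeeping, so I expect this factorization step to be the whole content of the lemma.
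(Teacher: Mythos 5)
Your proposal is correct and follows essentially the same route as the paper: the only substantive step is factoring a surjection $\overline{r}\to\overline{k}$ with $r>k+1$ through $\overline{k+1}$ by splitting a fiber of size at least two, which is exactly the paper's construction (the paper splits off a single element $a$ from a fiber containing two elements $a,b$ and sends it to $k+1$). You additionally spell out the cases $r\le k$ and the verification that morphisms out of $\overline{k+1}$ are irreducible, which the paper treats as clear; these details are fine.
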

\begin{proof}
It is clear that any morphism from $\overline{k+1}$ to $\overline{k}$ is irreducible. Now, assume that $r>k+1$ and let $E_{n}(t)\in SE_{n}(\overline{r},\overline{k})$
be a morphism, that is, $t$ is a total onto function $t:\overline{r}\to\overline{k}$.
We can choose distinct $a,b\in\overline{r}$ such that $t(a)=t(b)$
and define $s:\overline{r}\to\overline{k+1}$ and $h:\overline{k+1}\to\overline{k}$
by
\[
s(i)=\begin{cases}
t(i) & i\neq a\\
k+1 & i=a
\end{cases}\quad h(i)=\begin{cases}
i & i\leq k\\
t(a) & i=k+1
\end{cases}.
\]
It is clear that $E_{n}(s)$ and $E_{n}(h)$ are morphisms in $SE_{n}$
that are not isomorphisms, but $E_{n}(t)=E_{n}(h)E_{n}(s)$ so $E_{n}(t)$
is not an irreducible morphism.
\end{proof}
Now we can use the following result, which is precisely \cite[Theorem 6.13]{Margolis2012}
and \cite[Theorem 4.7]{Li2011} for the case of the field of complex
numbers.
\begin{thm}
\label{thm:QuiverOfEICategories}Let $A$ be a finite skeletal EI-category
and denote by $Q$ the quiver of $\mathbb{C}A$. Then:\end{thm}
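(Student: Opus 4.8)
The plan is to read the quiver directly off the two-step filtration $\Rad^{2}\mathbb{C}A\subseteq\Rad\mathbb{C}A\subseteq\mathbb{C}A$, exploiting the fact that the EI-hypothesis pins down the radical combinatorially. Write $G_{e}=A(e,e)$ for the automorphism group of an object $e$, and let $I\subseteq\mathbb{C}A$ be the span of all non-isomorphisms. First I would show $I=\Rad\mathbb{C}A$. Since $A$ is skeletal and EI, a non-isomorphism $a\to b$ forces $a\neq b$ and rules out any non-isomorphism $b\to a$ (a composite would be a non-invertible, hence non-existent, endomorphism), so the objects carry a genuine partial order and every composable chain of non-isomorphisms strictly descends. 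As products of basis morphisms are again basis morphisms or zero, this makes $I$ a two-sided ideal with $I^{|A^{0}|}=0$, i.e.\ nilpotent. The quotient $\mathbb{C}A/I$ is the algebra of the isomorphism groupoid, which in the skeletal case is the disjoint union of the $G_{e}$; hence $\mathbb{C}A/I\cong\prod_{e\in A^{0}}\mathbb{C}G_{e}$ is semisimple by Maschke, and a nilpotent ideal with semisimple quotient is exactly the radical. Consequently the simple $\mathbb{C}A$-modules are those of $\prod_{e}\mathbb{C}G_{e}$, so the vertices of $Q$ are the pairs $(e,S)$ with $e\in A^{0}$ and $S\in\Irr G_{e}$.

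Next I would identify $\Rad^{2}=I^{2}$. Because the product of two basis morphisms is a single basis morphism or zero, $I^{2}$ is spanned by the non-isomorphisms that factor as a product of two non-isomorphisms, which by the very definition of irreducibility are precisely the \emph{non-irreducible} morphisms. Comparing coefficients on the morphism basis shows that no irreducible morphism can lie in $I^{2}$ while every non-irreducible one does. Therefore $\Rad/\Rad^{2}$ has the irreducible morphisms as a basis and splits as the bimodule $\bigoplus_{e,f}\mathbb{C}\IRR A(e,f)$, where $G_{f}$ acts by post-composition and $G_{e}$ by pre-composition, both well defined on the quotient.

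Finally I would extract the arrow counts from the formula recalled in the preliminaries: the number of arrows from $(e,S)$ to $(f,T)$ equals $\dim e_{T}(\Rad/\Rad^{2})e_{S}$, where $e_{S}\in\mathbb{C}G_{e}$ and $e_{T}\in\mathbb{C}G_{f}$ are primitive idempotents for $S$ and $T$. Here one must check that a primitive idempotent of the corner algebra $1_{e}\mathbb{C}A\,1_{e}=\mathbb{C}G_{e}$ stays primitive in $\mathbb{C}A$ (a standard corner-algebra fact) and that this labelling matches the vertex set found above. Since $e_{T}$ lives in the block indexed by $f$ and $e_{S}$ in the block indexed by $e$, the operator $e_{T}(-)e_{S}$ kills every summand except $\mathbb{C}\IRR A(e,f)$, on which it cuts out the $(T\boxtimes S^{\ast})$-isotypic part of the $G_{f}\times G_{e}$-module $\mathbb{C}\IRR A(e,f)$. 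Rewriting the corner dimension as a multiplicity then gives
\[
\#\{(e,S)\to(f,T)\}=\dim\Hom_{G_{f}\times G_{e}}\bigl(T\boxtimes S^{\ast},\ \mathbb{C}\IRR A(e,f)\bigr).
\]

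I expect the only genuinely delicate point to be this last translation, where the dual $S^{\ast}$ is forced to appear: the right action of $G_{e}$ makes $\mathbb{C}\IRR A(e,f)$ a right module, and converting $\dim M e_{S}$ into an honest multiplicity requires identifying right simple modules with duals of left simple modules via $\mathbb{C}G_{e}^{\mathrm{op}}\cong\mathbb{C}G_{e}$. Keeping these identifications consistent with the chosen orientation of arrows (the preliminaries' $\dim e_{j}(\Rad/\Rad^{2})e_{i}$) is the real bookkeeping hurdle; by contrast the radical and $\Rad^{2}$ computations are essentially forced by the EI-hypothesis and \lemref{ClassificationOfIrreducibleMorphisms}-style analysis of irreducible morphisms.
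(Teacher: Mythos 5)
The paper does not prove this theorem: it is quoted verbatim from Margolis--Steinberg \cite[Theorem 6.13]{Margolis2012} and Li \cite[Theorem 4.7]{Li2011}, so there is no internal proof to compare against. Your outline is, in substance, the standard argument from those references and I see no gap in it. The three pillars are all sound: (i) in a skeletal EI-category the span $I$ of non-isomorphisms is a nilpotent ideal (a non-isomorphism $a\to b$ excludes \emph{any} morphism $b\to a$, since a composite endomorphism would be invertible and would make the non-isomorphism split, hence invertible by the EI property; this yields the strict order and $I^{|A^{0}|}=0$) with semisimple quotient $\prod_{e}\mathbb{C}G_{e}$, so $I=\Rad\mathbb{C}A$ and the vertices are as claimed; (ii) $I^{2}$ is spanned exactly by the non-isomorphisms admitting a factorization into two non-invertibles, i.e.\ the non-irreducible ones, so $\Rad/\Rad^{2}\cong\bigoplus_{e,f}\mathbb{C}\IRR A(e,f)$ as a bimodule; (iii) the corner computation $\dim e_{T}(\Rad/\Rad^{2})e_{S}$ localizes to the single summand $\mathbb{C}\IRR A(e,f)$ and returns the multiplicity of $T\boxtimes S^{\ast}$. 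You correctly flag the two points that need honest verification -- that a primitive idempotent of $1_{e}\mathbb{C}A1_{e}=\mathbb{C}G_{e}$ remains primitive in $\mathbb{C}A$, and the right-module-to-dual conversion that produces $S^{\ast}$ -- and both are routine; your arrow orientation also agrees with the paper's convention $\dim e_{j}(\Rad A/\Rad^{2}A)e_{i}$ and with the statement's $U\otimes V^{\ast}$ inside $\mathbb{C}\IRR A(c,c^{\prime})$.
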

\begin{enumerate}
\item The vertex set of $Q$ is ${\displaystyle \bigsqcup_{c\in A^{0}}}\Irr A(c,c)$.
\item If $V\in\Irr(A(c,c))$ and $U\in\Irr(A(c^{\prime},c^{\prime}))$,
then the number of arrows from $V$ to $U$ is the multiplicity of
$U\otimes V^{\ast}$ as an irreducible constituent in the $A(c^{\prime},c^{\prime})\times A(c,c)$-module $\mathbb{C}\IRR A(c,c^{\prime})$. Where the operation on $\mathbb{C}\IRR A(c,c^{\prime})$
is given by $(h,g)\ast f=hfg^{-1}$.
\end{enumerate}
Applying \thmref{QuiverOfEICategories} to our case enables us to
translate our original question to a problem in the theory of representations
of the symmetric group. In our case the endomorphism groups are $S_{k}$
for $0\leq k\leq n$ hence the vertex set is ${\displaystyle \bigsqcup_{k=0}^{n}}\Irr S_{k}$.
If $V\in\Irr(S_{k})$ and $U\in\Irr(S_{r})$ are such that $k\neq r+1$
then there are no arrows from $V$ to $U$ since by \lemref{ClassificationOfIrreducibleMorphisms}
there are no irreducible morphisms between the corresponding objects
in $SE_{n}$. If $U\in\Irr(S_{k})$ and $V\in\Irr(S_{k+1})$ then
the number of arrows from $V$ to $U$ is the multiplicity of $U\otimes V^{\ast}$
as an irreducible constituent in the $S_{k}\times S_{k+1}$-module
$M$, where $M$ is spanned by all the onto function $f:\overline{k+1}\to\overline{k}$
and the operation is $(h,g)\ast f=hfg^{-1}$. Note that $M$ is a
permutation module of $S_{k}\times S_{k+1}$ with basis $X=\{f:\overline{k+1}\to\overline{k}\mid f\text{ is onto}\}$.
The action on $X$ is transitive so if we choose any $f\in X$ and
denote by $K=\Stab(f)$ its stabilizer then: 
\[
M=\Ind_{K}^{S_{k}\times S_{k+1}}\tr_{K}
\]
 where $\tr_{K}$ is the trivial module of $K$. Now, choose $f\in X$
to be 
\[
f(i)=\begin{cases}
i & i\leq k\\
k & i=k+1
\end{cases}.
\]
 Let us describe $K$ more explicitly.
\begin{lem}
$K=\{(\sigma,\sigma\tau)\mid\sigma\in S_{k-1},\quad\tau\in S_{\{k,k+1\}}\}$.\end{lem}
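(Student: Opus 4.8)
The plan is to unwind the definition of the stabilizer and then read off the constraints on a pair $(h,g)$ directly from the fiber structure of $f$. Recalling that functions are composed from right to left, a pair $(h,g)$ with $h\in S_k$ and $g\in S_{k+1}$ lies in $K=\Stab(f)$ precisely when $hfg^{-1}=f$; substituting $i=g(j)$ turns this into the identity $h(f(j))=f(g(j))$ for every $j\in\overline{k+1}$, i.e. $hf=fg$. The first step is to rephrase this condition in terms of fibers: it says exactly that $g$ carries each fiber $f^{-1}(i)$ bijectively onto the fiber $f^{-1}(h(i))$. Indeed, since $g$ is injective and the fibers partition $\overline{k+1}$, the inclusions $g(f^{-1}(i))\subseteq f^{-1}(h(i))$ force equalities, and in particular $|f^{-1}(i)|=|f^{-1}(h(i))|$ for all $i$.

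Next I would exploit the very simple fiber structure of the chosen $f$. Its fibers are the singletons $\{i\}$ for $1\le i\le k-1$ together with the single doubleton $\{k,k+1\}$. Because $g$ preserves fiber sizes, the unique two-element fiber must be sent to itself, which forces $h(k)=k$ and $h(\{1,\dots,k-1\})=\{1,\dots,k-1\}$. Hence $h$ is a permutation $\sigma$ of $\{1,\dots,k-1\}$ fixing $k$, that is, $\sigma\in S_{k-1}$ regarded inside $S_k$.

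Once $h=\sigma$ is pinned down, I would recover $g$ directly from $hf=fg$. On each singleton fiber we get $g(i)=\sigma(i)$ for $i\le k-1$, so $g$ agrees with $\sigma$ on $\{1,\dots,k-1\}$; on the doubleton $\{k,k+1\}$ (mapped to itself since $\sigma(k)=k$) the map $g$ may be either of the two bijections, i.e. $g$ restricts there to an arbitrary $\tau\in S_{\{k,k+1\}}$. As $\sigma$ and $\tau$ have disjoint supports in $S_{k+1}$, they commute and $g=\sigma\tau$, giving $K\subseteq\{(\sigma,\sigma\tau)\}$. For the reverse inclusion I would do the short verification that any such $(\sigma,\sigma\tau)$ satisfies $h(f(j))=f(g(j))$, checking the three cases $j\le k-1$, $j=k$, and $j=k+1$ separately.

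I do not expect a genuine obstacle here, since the argument is essentially bookkeeping once the condition is read fiberwise. The one point demanding care is the dual role of $\sigma$—as an element of $S_k$ in the first coordinate (fixing $k$) and as an element of $S_{k+1}$ in the second coordinate (fixing both $k$ and $k+1$)—together with the composition convention: reversing the direction in $hfg^{-1}$ would swap domain and codomain and produce the transposed answer, so I would keep the right-to-left convention explicit throughout.
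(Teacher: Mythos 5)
Your proof is correct and follows essentially the same route as the paper's: both identify that $g$ must carry the unique two-element fibre $\{k,k+1\}$ onto itself and that $h$ and $g$ must then agree on $\{1,\ldots,k-1\}$, giving $h=\sigma$ and $g=\sigma\tau$. The only cosmetic difference is that you work directly with the stabilizer condition $hf=fg$ read fibrewise, whereas the paper first analyses the equation $hfg=f$ and only at the end inverts $g$ to account for the action $(h,g)\ast f=hfg^{-1}$.
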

\begin{proof}
Assume $h\in S_{k}$ and $g\in S_{k+1}$ are such that $hfg=f$ .
Now, denote $l_1=g^{-1}(k)$ and $l_2=g^{-1}(k+1)$. Since $hfg(l_1)=hfg(l_2)$
and $hfg=f$ then we must have that $l_1=k$ and $l_2=k+1$ or vice versa.
In other words $g$ must send $\{k,k+1\}$ onto $\{k,k+1\}$. For
$i<k$ if $g(i)=j$ then we must have $h(j)=i$. Now it is clear that
there are $\sigma\in S_{k-1}$ and $\tau\in S_{\{k,k+1\}}$ such that
$g=\sigma\tau$ and $h=\sigma^{-1}$ (we view $S_{k-1}$ as a subgroup
of $S_{k}$ in the usual way). Since our action is $(h,g)\ast f=hfg^{-1}$
we see that the stabilizer of $f$ is $K=\Stab(f)=\{(\sigma,\sigma\tau)\mid\sigma\in S_{k-1},\quad\tau\in S_{\{k,k+1\}}\}\cong S_{k-1}\times S_{2}$.
\end{proof}
In the following computations we will write $S_{2}$ instead of $S_{\{k,k+1\}}$
and regard it as a subgroup of $S_{k+1}$. Also we regard in the usual
way $S_{k-1}\times S_{2}$ and $S_{k-1}$ as subgroups of $S_{k+1}$
and $S_{k}$ respectively. We also denote by $\tr_{2}$ the trivial
representation of $S_{2}$.
\begin{lem}
\label{lem:AlternativeDescriptionOfNumberOfArrows}The number of arrows
from $V$ to $U$ is the multiplicity of $V$ as an irreducible constituent
in the $S_{k+1}$-module $\Ind_{S_{k-1}\times S_{2}}^{S_{k+1}}(\Res_{S_{k-1}}^{S_{k}}(U)\otimes\tr_{2})$.\end{lem}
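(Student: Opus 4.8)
The plan is to take the description of the arrow count obtained just above from \thmref{QuiverOfEICategories} together with the previous two lemmas---namely that the number of arrows from $V$ to $U$ is the multiplicity of $U\otimes V^{*}$ in the $S_{k}\times S_{k+1}$-module $M=\Ind_{K}^{S_{k}\times S_{k+1}}\tr_{K}$, with $K\cong S_{k-1}\times S_{2}$---and to transport this quantity to the group $S_{k+1}$ by two applications of Frobenius reciprocity. First I would apply Frobenius reciprocity to $M=\Ind_{K}^{S_{k}\times S_{k+1}}\tr_{K}$, rewriting the multiplicity of $U\otimes V^{*}$ in $M$ as $\langle\tr_{K},\Res_{K}(U\otimes V^{*})\rangle_{K}$. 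Since every irreducible representation of a symmetric group is self-dual, $V^{*}\cong V$, so this equals $\langle\tr_{K},\Res_{K}(U\otimes V)\rangle_{K}$.

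The heart of the argument is to identify $\Res_{K}(U\otimes V)$ explicitly using the description $K=\{(\sigma,\sigma\tau)\mid\sigma\in S_{k-1},\ \tau\in S_{\{k,k+1\}}\}$ from the previous lemma. Under the isomorphism $K\cong S_{k-1}\times S_{2}$ given by $(\sigma,\tau)\mapsto(\sigma,\sigma\tau)$, the projection of $K$ into the first factor $S_{k}$ has image $S_{k-1}$ and forgets the $S_{2}$-coordinate, whereas the projection into the second factor $S_{k+1}$ is exactly the standard inclusion $S_{k-1}\times S_{2}\hookrightarrow S_{k+1}$. Pulling $U$ back along the first projection therefore yields $\Res_{S_{k-1}}^{S_{k}}(U)\otimes\tr_{2}$, while pulling $V$ back along the second yields $\Res_{S_{k-1}\times S_{2}}^{S_{k+1}}(V)$; hence $\Res_{K}(U\otimes V)$ is the internal tensor product $W\otimes\Res_{S_{k-1}\times S_{2}}^{S_{k+1}}(V)$, where $W=\Res_{S_{k-1}}^{S_{k}}(U)\otimes\tr_{2}$. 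Using the standard identity $\langle\tr_{K},W\otimes\Res_{S_{k-1}\times S_{2}}^{S_{k+1}}(V)\rangle_{K}=\langle W^{*},\Res_{S_{k-1}\times S_{2}}^{S_{k+1}}(V)\rangle_{K}$ together with $W^{*}\cong W$ (self-duality once more), the arrow count becomes $\langle W,\Res_{S_{k-1}\times S_{2}}^{S_{k+1}}(V)\rangle$, and a final Frobenius reciprocity step rewrites this as $\langle\Ind_{S_{k-1}\times S_{2}}^{S_{k+1}}W,\,V\rangle$, that is, the multiplicity of $V$ in $\Ind_{S_{k-1}\times S_{2}}^{S_{k+1}}(\Res_{S_{k-1}}^{S_{k}}(U)\otimes\tr_{2})$, as claimed.

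The step I expect to demand the most care is the computation of the two coordinate projections of $K$: one must check that the first projection lands in $S_{k-1}$ with the $S_{2}$-factor acting trivially---this is where the factor $\tr_{2}$ is born---while the second projection is the honest inclusion into $S_{k+1}$. This is precisely where the twist $(\sigma,\tau)\mapsto(\sigma,\sigma\tau)$ in the description of $K$ does its work, collapsing the external tensor product $U\otimes V^{*}$ into an internal tensor product over $S_{k-1}\times S_{2}$. The two invocations of self-duality of symmetric-group representations (for $V$ and for $W$) are routine but should be stated explicitly, since they are exactly what allow the dualizations to be discarded.
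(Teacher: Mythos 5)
Your proposal is correct and follows essentially the same route as the paper: two applications of Frobenius reciprocity sandwiching the identification of the restriction to $K$ via the twisted embedding $(\sigma,\tau)\mapsto(\sigma,\sigma\tau)$, which produces the factor $\tr_{2}$ on the $S_{k}$ side and the standard inclusion $S_{k-1}\times S_{2}\hookrightarrow S_{k+1}$ on the other. The only difference is cosmetic: the paper carries out the middle step as an explicit character sum and invokes the real-valuedness of symmetric-group characters where you invoke self-duality, which amounts to the same thing.
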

\begin{proof}
The number of arrows from $V$ to $U$ is the multiplicity of $U\otimes V^{\ast}$
in $M$ and this number can be expressed by the inner product of characters:
\[
\langle U\otimes V^{\ast},\Ind_{K}^{S_{k}\times S_{k+1}}\tr_{K}\rangle
\]

(recall that in order to simplify notation, we use the same notation
for the representation and its character). Using Frobenius reciprocity
we can see that:
\begin{align*}
\langle U\otimes V^{\ast},\Ind_{K}^{S_{k}\times S_{k+1}}\tr_{K}\rangle & =\langle\Res_{K}^{S_{k}\times S_{k+1}}(U\otimes V^{\ast}),\tr_{K}\rangle\\
 & =\frac{1}{|K|}\sum_{(\sigma,\sigma\tau)\in K}U\otimes V^{\ast}((\sigma,\sigma\tau))\\
 & =\frac{1}{|K|}\sum_{(\sigma,\tau)\in S_{k-1}\times S_{2}}U(\sigma)V^{\ast}(\sigma\tau).\\
\end{align*}

Since the characters of $S_{n}$ are real-valued, $V^{\ast}(\sigma\tau)=V(\sigma\tau)$
so this equals:
\begin{align*}
\frac{1}{|K|}\sum_{(\sigma,\tau)\in S_{k-1}\times S_{2}}U(\sigma)V(\sigma\tau) & =\frac{1}{|K|}\sum_{(\sigma,\tau)\in S_{k-1}\times S_{2}}V(\sigma\tau)U(\sigma)\tr_{2}(\tau)\\
 & =\langle\Res_{K}^{S_{k+1}}V,\Res_{S_{k-1}}^{S_{k}}(U)\otimes\tr_{2}\rangle.
\end{align*}

Again, using Frobenius reciprocity this equals:
\[
\langle V,\Ind_{S_{k-1}\times S_{2}}^{S_{k+1}}(\Res_{S_{k-1}}^{S_{k}}(U)\otimes\tr_{2})\rangle.
\]

\end{proof}
The benefit of the description of \lemref{AlternativeDescriptionOfNumberOfArrows}
is that the module \linebreak $\Ind_{S_{k-1}\times S_{2}}^{S_{k+1}}(\Res_{S_{k-1}}^{S_{k}}(U)\otimes\tr_{2})$
has a good combinatorial description using Young diagrams. We will
use facts from \cite[section 2.8]{James1981}. Recall that if $\alpha$
is the Young diagram corresponding to $W\in\Irr S_{k}$ then 
\[
\Res_{S_{k-1}}^{S_{k}}(W)
\]

is the sum of simple modules that correspond to the diagrams that
are obtained from $\alpha$ by removing one box (this is the well
known branching rule). Now, if $\alpha$ is the Young diagram that
corresponds to $W\in S_{k-1}$ then the module 
\[
\Ind_{S_{k-1}\times S_{2}}^{S_{k+1}}(W\otimes\tr_{2})
\]

is the sum of simple modules that correspond to the diagrams that
are obtained from $\alpha$ by adding two boxes, but not in the same
column. This is a special case of Young's rule.

Hence, if $U$ is an irreducible $S_{k}$-module that corresponds
to a Young diagram $\alpha$ then the $S_{k+1}$-module $\Ind_{S_{k-1}\times S_{2}}^{S_{k+1}}(\Res_{S_{k-1}}^{S_{k}}(U)\otimes\tr_{2})$
corresponds to the sum of Young diagrams obtained from $\alpha$ by
removing one box and then adding two boxes but not in the same column. A diagram can appear in this summation more than once and we count the diagrams with multiplicity. If $V$ corresponds to a Young diagram $\beta$ then the number of arrows
from $V$ to $U$ is the number of times that $\beta$ occur in this
summation.
\begin{example}
Let $U$ be the standard representation of $S_{3}$ whose corresponding
Young diagram is $\alpha=[2,1]$: 
\[
\yng(2,1)
\]

Then the module 
\[
\Res_{S_{k-1}}(U)
\]
corresponds to 
\[
\yng(2)+\yng(1,1)
\]
where sum of diagrams means the direct sum of the corresponding simple
modules. Now, the module
\[
\Ind_{S_{k-1}\times S_{2}}^{S_{k+1}}(\Res_{S_{k-1}}^{S_{k}}(U)\otimes\tr_{2})
\]
corresponds to
\[
\yng(4)+\yng(3,1)+\yng(2,2)+\yng(3,1)+\yng(2,1,1).
\]
Hence, there are two arrows in the quiver from $\tiny\yng(3,1)$ to
$\tiny\yng(2,1)$, and one arrow from $\tiny\yng(4)$, $\tiny\yng(2,2)$
and $\tiny\yng(2,1,1)$ to $\tiny\yng(2,1)$. A full drawing of the
quiver of $\mathbb{C}\PT_{4}$ is given in the next figure:
\end{example}
\begin{center} \begin{tikzpicture}\path (-4,5) node (S4_1) {$\yng(4)$};\path (-2,5) node (S4_2) {$\yng(3,1)$};\path (0,5) node (S4_3) {$\yng(2,2)$};\path (2,5) node (S4_4) {$\yng(2,1,1)$};\path (4,5) node (S4_5) {$\yng(1,1,1,1)$}; \path (-2,3) node (S3_1) {$\yng(3)$}; \path (0,3) node (S3_2) {$\yng(2,1)$}; \path (2,3) node (S3_3) {$\yng(1,1,1)$}; \path (-1,1.5) node (S2_1) {$\yng(2)$}; \path (1,1.5) node (S2_2) {$\yng(1,1)$}; \path (0,0) node (S1_1) {$\yng(1)$}; \path (0,-1.5) node (S0_1) {$\emptyset$};\draw[thick,->] (S4_1)--(S3_1) ;\draw[thick,->] (S4_2)--(S3_1) ;\draw[thick,->] (S4_3)--(S3_1) ; \draw[thick,->] (S4_1)--(S3_2) ;\draw[thick,->>] (S4_2)--(S3_2) ;\draw[thick,->] (S4_3)--(S3_2) ;\draw[thick,->] (S4_4)--(S3_2) ; \draw[thick,->] (S4_2)--(S3_3) ; \draw[thick,->] (S4_4)--(S3_3) ; \draw[thick,->] (S3_1)--(S2_1) ; \draw[thick,->] (S3_2)--(S2_1) ; \draw[thick,->] (S3_1)--(S2_2) ; \draw[thick,->] (S3_2)--(S2_2) ; \draw[thick,->] (S2_1)--(S1_1) ;\end{tikzpicture} \end{center}In
conclusion, we end up with the following theorem:
\begin{thm}
\label{thm:DescriptionOfTheQuiverOfPT_n}The vertices in the quiver
of $\mathbb{C}\PT_{n}$ are in one-to-one correspondence with Young
diagrams with $k$ boxes where $0\leq k\leq n$. If $\alpha\vdash k$,
\textup{$\beta\vdash r$} are two Young diagrams such that $r\neq k+1$
then there are no arrows from $\beta$ to $\alpha$. If $r=k+1$ then
there are arrows from $\beta$ to $\alpha$ if we can construct $\beta$
from $\alpha$ by removing one box and then adding two boxes but not
in the same column. The number of arrows is the number of different
ways that this construction can be carried out.\end{thm}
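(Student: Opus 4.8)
The plan is to assemble the pieces already in hand, since the preceding lemmas carry essentially all of the content. First I would fix the vertex set. Applying \thmref{MunnPonizovski} to $\PT_{n}$, whose regular $\Jc$-classes are indexed by the rank $k$ with $0\le k\le n$ and whose maximal subgroup in the rank-$k$ class is $S_{k}$, gives a bijection $\Irr\PT_{n}\leftrightarrow\bigsqcup_{k=0}^{n}\Irr S_{k}$. Since $\Irr S_{k}$ is indexed by the partitions of $k$, the vertices of the quiver are exactly the Young diagrams with $k$ boxes for $0\le k\le n$, as asserted.

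Next I would reduce to the EI-category computation and dispose of the vanishing statement. By Proposition~\ref{prop:MainIsomorphism} the algebra $\mathbb{C}\PT_{n}$ is isomorphic to $\mathbb{C}E_{n}$, and $E_{n}$ is equivalent to the skeletal category $SE_{n}$, so the two algebras are Morita equivalent and share the same quiver. Now \thmref{QuiverOfEICategories} says that the number of arrows from $V\in\Irr S_{r}$ to $U\in\Irr S_{k}$ is read off from the $S_{k}\times S_{r}$-module $\mathbb{C}\IRR SE_{n}(\overline{r},\overline{k})$. By \lemref{ClassificationOfIrreducibleMorphisms} this module is zero unless $r=k+1$, which immediately gives the claim that there are no arrows from $\beta\vdash r$ to $\alpha\vdash k$ when $r\ne k+1$.

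It remains to count arrows in the case $r=k+1$, and here I would invoke \lemref{AlternativeDescriptionOfNumberOfArrows}: writing $\alpha$ for the diagram of $U$ and $\beta$ for the diagram of $V$, the number of arrows from $V$ to $U$ is the multiplicity of $V$ in the $S_{k+1}$-module $\Ind_{S_{k-1}\times S_{2}}^{S_{k+1}}(\Res_{S_{k-1}}^{S_{k}}(U)\otimes\tr_{2})$. I would then evaluate this module diagrammatically in two steps. The branching rule expresses $\Res_{S_{k-1}}^{S_{k}}(U)$ as the multiplicity-free sum of the modules whose diagrams are obtained from $\alpha$ by deleting a single box. Applying Young's rule (the Pieri case) to each such summand $\gamma$, the module $\Ind_{S_{k-1}\times S_{2}}^{S_{k+1}}(\gamma\otimes\tr_{2})$ is the sum of the modules whose diagrams are obtained from $\gamma$ by adjoining two boxes lying in distinct columns. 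Composing, the multiplicity of $\beta$ in the whole module equals the number of ways to pass from $\alpha$ to $\beta$ by first removing one box and then adding two boxes in distinct columns, which is precisely the asserted count.

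I do not expect a genuine obstacle, as the representation-theoretic work is already done in \lemref{AlternativeDescriptionOfNumberOfArrows}; the only thing to be careful about is the bookkeeping of multiplicities. A given target diagram $\beta$ can be reached through several different intermediate diagrams $\gamma$, and even for a fixed $\gamma$ through different choices of the two added boxes, so these contributions must be summed with multiplicity rather than collapsed to a single occurrence. I would also flag explicitly that the passage from $U\otimes V^{\ast}$ to the cleaner statement in terms of $V$ relies on the characters of symmetric groups being real-valued (so that $V^{\ast}\cong V$), a point already settled inside the proof of \lemref{AlternativeDescriptionOfNumberOfArrows}, and that the direction of the arrows is governed by the $(h,g)\ast f=hfg^{-1}$ convention of \thmref{QuiverOfEICategories}.
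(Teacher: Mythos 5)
Your proposal is correct and follows essentially the same route as the paper: the vertex set via \thmref{MunnPonizovski}, the reduction to the skeletal EI-category $SE_{n}$ through Proposition~\ref{prop:MainIsomorphism} and Morita equivalence, the vanishing for $r\neq k+1$ via \lemref{ClassificationOfIrreducibleMorphisms}, and the count via \lemref{AlternativeDescriptionOfNumberOfArrows} combined with the branching rule and Young's rule. Your cautionary remarks about counting diagrams with multiplicity and about $V^{\ast}\cong V$ match the points the paper itself makes.
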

\begin{rem}
Note that up to rank $n-1$ the quiver of $\mathbb{C}\PT_{n}$ is
precisely the quiver of $\mathbb{C}\PT_{n-1}$.
\end{rem}

\section{Other invariants of $\mathbb{C}\PT_{n}$}

In this section we use the above results in order to find other important
invariants of the algebra $\mathbb{C}\PT_{n}$.

\subsection{Connected components of the quiver of $\mathbb{C}\PT_{n}$}

\begin{prop}
For every $n>1$, the quiver of $\mathbb{C}\PT_{n}$ has three connected
components, with two isolated components: $\emptyset$ and $[1^{n}]$
(the sign representation of rank $n$).\end{prop}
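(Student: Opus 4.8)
The plan is to feed the arrow-counting description of \thmref{DescriptionOfTheQuiverOfPT_n} into an induction on $n$, exploiting the Remark that the full subquiver of $\mathbb{C}\PT_n$ on the vertices of rank $\le n-1$ coincides with the quiver of $\mathbb{C}\PT_{n-1}$. Since connected components of a quiver ignore orientation, I work throughout in the underlying undirected graph, and I must establish exactly two things: that $\emptyset$ and $[1^n]$ are isolated vertices, and that all remaining vertices lie in a single component.

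Isolation is the easy part. Arrows only join consecutive ranks $k+1\to k$, so a rank-$n$ vertex never receives an arrow and a rank-$0$ vertex never emits one. The vertex $\emptyset$ emits nothing because the construction ``remove one box, then add two'' cannot even begin when there is no box to remove, and it receives nothing since it sits at rank $0$; hence it is isolated. For $[1^n]$: it receives no arrow (it is of top rank), and it emits none because assembling a single column by adding two boxes would force those two boxes into the same column, which \thmref{DescriptionOfTheQuiverOfPT_n} forbids. Thus $[1^n]$ is isolated.

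For connectivity I would induct on $n$, the base case $n=2$ being a direct check: the three components are $\{\emptyset\}$, $\{[1,1]\}$, and $\{[1],[2]\}$, the last joined by the arrow $[2]\to[1]$. In the inductive step the Remark identifies the subquiver on ranks $\le n-1$ with the quiver of $\mathbb{C}\PT_{n-1}$, whose non-isolated vertices form, by the induction hypothesis, one connected set $B$ equal to every diagram of rank $1,\ldots,n-1$ save the column $[1^{n-1}]$. Adjoining the rank-$n$ vertices together with the new downward arrows, I must show (a) that every rank-$n$ diagram $\beta\neq[1^n]$ attaches to $B$, and (b) that the column $[1^{n-1}]$, isolated at the previous stage, reattaches; the latter is witnessed by the arrow $[2,1^{n-2}]\to[1^{n-1}]$, whose source is a rank-$n$ diagram already attached to $B$ by (a). Since the new arrows run only between ranks $n$ and $n-1$, the vertex $\emptyset$ is untouched, so exactly three components survive.

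The combinatorial heart---and the main obstacle---is the lemma that a diagram $\beta\vdash n$ admits the simultaneous removal of two boxes lying in \emph{different} columns, leaving a valid Young diagram, precisely when $\beta$ is not a single column. Granting it, I remove two such boxes from $\beta$ to get $\delta\vdash n-2$ and then append one box to the first row of $\delta$, producing $\alpha\vdash n-1$ with first row of length $\ge 2$; by construction $\beta\to\alpha$ is an arrow and $\alpha\neq[1^{n-1}]$, so $\alpha\in B$ and $\beta$ attaches, proving (a). The lemma splits on the number of removable corners of $\beta$: if there are at least two they automatically occupy distinct rows and columns and may both be deleted, while if there is exactly one then $\beta$ is a rectangle $[c^r]$ with $c\ge 2$ and the last two boxes of its bottom row suffice. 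The remaining work---checking that the two deletions and the single addition genuinely satisfy the ``remove one, add two in different columns'' condition, in particular that the appended box is an honest corner of $\alpha$---is routine and would be dispatched without comment.
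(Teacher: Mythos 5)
Your proposal is correct and follows essentially the same route as the paper: induction on $n$ via the Remark, with $\emptyset$ and $[1^{n}]$ isolated for the same reasons, every rank-$n$ diagram other than $[1^{n}]$ hooked to the big component by an explicit downward arrow, and $[1^{n-1}]$ reattached by an arrow out of a rank-$n$ vertex (you use $[2,1^{n-2}]$ where the paper uses $[3,1^{n-3}]$; both work). The only difference is cosmetic: the paper splits on whether the last part of the rank-$n$ partition exceeds $1$, while you split on the number of removable corners and route through ``delete two boxes in different columns, then add one box to the first row''---an equally valid choice of witness arrow.
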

\begin{proof}
Using \thmref{DescriptionOfTheQuiverOfPT_n}, it is easy to prove
the statement by induction. The claim is obvious for $n=2$. For $n>2$,
assume that the quiver of $\mathbb{C}\PT_{n-1}$ has three connected components
with $[1^{n-1}]$ and $\emptyset$ being isolated. Now consider the
quiver of $\mathbb{C}\PT_{n}$. Recall that up to rank $n-1$ the
quiver of $\mathbb{C}\PT_{n}$ is the quiver of $\mathbb{C}\PT_{n-1}$.
Moreover, it is clear that there is no arrow from $[1^{n}]$ and there
exists an arrow from $[3,1^{n-3}]$ to $[1^{n-1}]$. Hence it is left
to show that the rank $n$ vertices (except for $[1^{n}]$) are connected
to some rank $n-1$ representation (other than $[1^{n-1}]$). Consider
some vertex $v=[\alpha_{1},\ldots,\alpha_{k}]$ and note that $\alpha_{1}>1$
(since $v\neq[1^{n}]$). If $\alpha_{k}>1$ then there is an arrow
from $v$ to $u=[\alpha_{1},\ldots,\alpha_{k-1},(\alpha_{k}-1)]$  because one
can remove one box from the last row and add two. The next figure illustrates this case:

\ytableausetup
{mathmode, boxsize=1.7em}
\begin{ytableau}
\none & \none & \none & \none & \none[u]\\
\none[\alpha_1] & \none & \quad & \quad   & \quad & \none[\cdots] & \quad & \quad  \\
\none[\vdots] & \none & \none[\vdots] & \none & \none   & \none  & \none & \none[\iddots] & \none & \none[\Longrightarrow] & \none\\
 \none[ \alpha_{k-1}] & \none & \quad & \none[\cdots] & \quad & \quad & \quad \\
 \none[ \alpha_k-1] & \none & \quad & \none[\cdots] & \text{X}
\end{ytableau} 
\begin{ytableau}
\none & \none & \none & \none & \none[v]\\
\none[\alpha_1] & \none & \quad & \quad   & \quad & \none[\cdots] & \quad & \quad \\
\none[\vdots] & \none & \none[\vdots] & \none & \none & \none & \none & \none[\iddots]  \\
 \none[ \alpha_{k-1}] & \none & \quad & \none[\cdots] & \quad & \quad & \quad \\
 \none[ \alpha_k] & \none & \quad & \none[\cdots] & \bf{+} & \bf{+}
\end{ytableau}
\newline
\newline
The box removed from $u$ is marked with "X" and the boxes added to obtain $v$  are marked with "+".
Now, if $\alpha_{k}=1$
then we can write $v=[\alpha_{1},\ldots,\alpha_{k-1},1]$. Let $l$
be maximal such that $\alpha_{l}>1$ (such $l$ exists since $\alpha_{1}>1$).
It is easy to observe that there is an arrow from $v$ to $u=[\alpha_{1},\ldots,\alpha_{k-1}]$. This is because we can remove a box from the $l$-th row and add
two boxes, one in the $l$-th row and one in the last row. The requirement that $l$ is maximal such that $\alpha_{l}>1$ ensures that they are not in the same column. This case is illustrated in the next figure:

\ytableausetup
{mathmode, boxsize=1.7em}
\begin{ytableau}
\none & \none & \none & \none & \none[u]\\
\none[\alpha_1] & \none &  \quad   & \quad & \none[\cdots] & \quad & \quad  \\
\none[\vdots] & \none & \none[\vdots] & \none & \none    & \none[\iddots] \\
 \none[ \alpha_{l}] & \none & \quad & \none[\cdots] & \text{X}  & \none & \none & \none & \none[\Longrightarrow] & \none\\
 \none[\alpha_{l+1}=1] & \none & \quad  \\
\none[\vdots] & \none & \none[\vdots] \\
 \none[\alpha_{k-1}=1] & \none & \quad 
\end{ytableau} 
\ytableausetup
{mathmode, boxsize=1.7em}
\begin{ytableau}
\none & \none & \none & \none & \none[v]\\
\none[\alpha_1] & \none &  \quad   & \quad & \none[\cdots] & \quad & \quad  \\
\none[\vdots] & \none & \none[\vdots] & \none & \none  & \none[\iddots] \\
 \none[ \alpha_{l}] & \none & \quad & \none[\cdots] & \bf{+}  \\
 \none[\alpha_{l+1}=1] & \none & \quad  \\
\none[\vdots] & \none & \none[\vdots] \\
 \none[\alpha_{k-1}=1] & \none & \quad \\ 
 \none[\alpha_{k}=1] & \none & \bf{+}
\end{ytableau} 
\newline
\newline
We have marked the removed and added boxes with "X" and "+" respectively as above.
This finishes the proof.
\end{proof}

\subsection{Loewy series of $\mathbb{C}\PT_{n}$}

Now we observe that we can ``see'' $\Rad^{k}\mathbb{C}E_{n}$ inside
the category itself. In other words, certain morphisms of the category
$E_{n}$ span $\Rad^{k}\mathbb{C}E_{n}$. We start with the case $k=1$. 

We mention that \cite[Proposition 4.6]{Li2011} is a similar observation
for any EI-category.
\begin{lem}
\label{lem:RadicalOfEpimorphismCategory}$\Rad\mathbb{C}E_{n}=\Span\{E_{n}(t)\mid t\in\PT_{n}\backslash\IS_{n}\}$.\end{lem}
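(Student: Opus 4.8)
The plan is to write $I=\Span\{E_{n}(t)\mid t\in\PT_{n}\backslash\IS_{n}\}$ and to prove the identity $\Rad\mathbb{C}E_{n}=I$ by establishing two things: that $I$ is a nilpotent two-sided ideal, and that the quotient $\mathbb{C}E_{n}/I$ is semisimple. Given the two characterizations of the radical recalled in Section 2.1 — that $\Rad A$ is the \emph{maximal} nilpotent ideal and simultaneously the \emph{minimal} ideal with semisimple quotient — these two facts force $I\subseteq\Rad\mathbb{C}E_{n}$ and $\Rad\mathbb{C}E_{n}\subseteq I$ respectively, hence equality. So the work splits cleanly along these two inclusions.

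The first thing I would record is the purely categorical observation that $E_{n}(t)$ is an isomorphism in $E_{n}$ exactly when $t\in\IS_{n}$: for such $t$ the morphism is a bijection from $\dom t$ onto $\im t$ with $|\dom t|=|\im t|$, whereas for $t\in\PT_{n}\backslash\IS_{n}$ we have $|\im t|<|\dom t|$, and objects of different cardinality are non-isomorphic. From this the ideal property is immediate: if a product $E_{n}(s)E_{n}(t)=E_{n}(st)$ is nonzero and at least one factor is non-injective, then $|\im(st)|<|\dom(st)|$, so $E_{n}(st)\in I$, while undefined products vanish. Thus left- or right-multiplying any basis morphism by an element of $I$ returns an element of $I$, so $I$ is a two-sided ideal. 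Nilpotency follows from the same cardinality bookkeeping: in any nonzero product $E_{n}(t_{1})\cdots E_{n}(t_{m})$ of basis elements of $I$ the image cardinality drops strictly at each step, and since ranks lie in $\{0,1,\ldots,n\}$ no product of length $n+1$ survives, giving $I^{\,n+1}=0$. Hence $I$ is a nilpotent ideal and $I\subseteq\Rad\mathbb{C}E_{n}$.

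For the reverse inclusion I would identify the quotient explicitly. As a vector space $\mathbb{C}E_{n}=I\oplus B$, where $B=\Span\{E_{n}(t)\mid t\in\IS_{n}\}$, and the composition rule above shows $B$ is closed under multiplication, since a composite of isomorphisms is again an isomorphism (or zero). In fact $B$ is precisely the groupoid algebra $\mathbb{C}G_{n}$ of the isomorphisms of $E_{n}$, so the projection $\mathbb{C}E_{n}\to\mathbb{C}E_{n}/I$ restricts to an algebra isomorphism $B\cong\mathbb{C}E_{n}/I\cong\mathbb{C}G_{n}$. Because groupoid algebras are semisimple (Section 2.3), $\mathbb{C}E_{n}/I$ is semisimple, whence $\Rad\mathbb{C}E_{n}\subseteq I$ and we conclude $\Rad\mathbb{C}E_{n}=I$. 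The only step that requires genuine care — though it is short — is verifying that $B$ really is a subalgebra coinciding with $\mathbb{C}G_{n}$, i.e. that passing to the quotient does not merge isomorphism morphisms with non-isomorphism ones; this is exactly the content of the composition analysis in the second paragraph, so no separate difficulty arises. Everything else is routine cardinality counting.
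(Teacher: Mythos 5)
Your proof is correct and follows essentially the same route as the paper: show that $\Span\{E_{n}(t)\mid t\in\PT_{n}\backslash\IS_{n}\}$ is a nilpotent ideal and that the quotient is the semisimple groupoid algebra $\mathbb{C}G_{n}$, then invoke the two characterizations of the radical. You merely spell out the cardinality bookkeeping and the identification of the quotient that the paper dismisses as ``easy to see,'' and these details are all accurate.
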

\begin{proof}
Write $R=\Span\{E_{n}(t)\mid t\in\PT_{n}\backslash\IS_{n}\}$. It
is easy to see that $R$ is a nilpotent ideal hence $R\subseteq\Rad\mathbb{C}E_{n}$.
In addition $\mathbb{C}E_{n}/R\cong\mathbb{C}G_{n}$, but $G_{n}$
is a groupoid and its algebra is semisimple, hence $\Rad\mathbb{C}E_{n}\subseteq R$
and we are done.\end{proof}
\begin{lem}
\label{lem:DescriptionOfRadToTheK}$\Rad^{k}\mathbb{C}E_{n}=\Span\{E_{n}(t)\mid|\dom t|-|\im t|\geq k\}$.\end{lem}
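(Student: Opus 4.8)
The plan is to prove the statement by induction on $k$, using Lemma~\lemref{RadicalOfEpimorphismCategory} (the case $k=1$) as the base case. Write $R_k = \Span\{E_n(t)\mid |\dom t|-|\im t|\geq k\}$. The inductive strategy rests on the identity $\Rad^{k+1}\mathbb{C}E_n = (\Rad\,\mathbb{C}E_n)(\Rad^k\mathbb{C}E_n)$, so assuming $\Rad^k\mathbb{C}E_n = R_k$ it suffices to show that $(\Rad\,\mathbb{C}E_n)\cdot R_k = R_{k+1}$. The quantity $|\dom t|-|\im t|$ is the natural ``defect'' of a partial function, measuring how far $t$ is from being injective, and the key structural fact is that this defect is \emph{superadditive} under composition of morphisms in $E_n$.

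First I would establish the multiplicative behavior of the defect. If $E_n(s)\in \Rad\,\mathbb{C}E_n$ and $E_n(t)\in R_k$ are composable morphisms (so $\im t = \dom s$ as objects of $SE_n$, and the product $E_n(st)$ is nonzero), then since $s$ is a total onto map with $|\dom s|-|\im s|\geq 1$ and $t$ has $|\dom t|-|\im t|\geq k$, I claim $|\dom(st)|-|\im(st)|\geq k+1$. Here $\dom(st)=\dom t$ and $\im(st)=\im s$, so the defect of $st$ equals $(|\dom t|-|\im t|)+(|\im t|-|\im s|) = (|\dom t|-|\im t|)+(|\dom s|-|\im s|)\geq k+1$, using that $\im t=\dom s$. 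This shows $(\Rad\,\mathbb{C}E_n)\cdot R_k \subseteq R_{k+1}$, giving the inclusion $\Rad^{k+1}\mathbb{C}E_n\subseteq R_{k+1}$.

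For the reverse inclusion $R_{k+1}\subseteq \Rad^{k+1}\mathbb{C}E_n$, I would take an arbitrary morphism $E_n(t)$ with $|\dom t|-|\im t|\geq k+1$ and exhibit it as a product of a defect-$\geq 1$ morphism with a defect-$\geq k$ morphism. Concretely, one factors $t$ through an intermediate object: choose a single pair of elements of $\dom t$ with the same image and ``merge'' only them via a map $s$ that drops the rank by exactly one (a morphism in $\Rad\,\mathbb{C}E_n$ by Lemma~\lemref{RadicalOfEpimorphismCategory}), leaving a residual map of defect $\geq k$ in $R_k = \Rad^k\mathbb{C}E_n$. This is essentially the same factorization construction used in the proof of Lemma~\lemref{ClassificationOfIrreducibleMorphisms}, iterated once. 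Since $E_n(t)$ lies in the product $(\Rad\,\mathbb{C}E_n)(\Rad^k\mathbb{C}E_n)=\Rad^{k+1}\mathbb{C}E_n$, and these morphisms span $R_{k+1}$, we get the desired inclusion.

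\emph{The main obstacle} I anticipate is not the defect bookkeeping, which is routine, but justifying cleanly that $\Rad^{k+1}\mathbb{C}E_n$ equals the honest product $(\Rad\,\mathbb{C}E_n)(\Rad^k\mathbb{C}E_n)$ as \emph{spans} and that one may reduce to basis morphisms throughout. Because $\varphi$ and $\psi$ are isomorphisms expressed as triangular (M\"{o}bius) sums over the natural order on $\PT_n$, one must be careful that the spanning sets interact correctly with these transition formulas, rather than working directly with the basis $\{E_n(t)\}$ and its induced filtration. The superadditivity of the defect is exactly what guarantees the radical filtration is matched by the rank-drop filtration, so once the product formula for powers of the radical is invoked, the identification is forced.
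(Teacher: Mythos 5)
Your proposal is correct and follows essentially the same route as the paper: the forward inclusion via additivity of the defect $|\dom t|-|\im t|$ under composition of nonzero products of basis morphisms (which the paper leaves as ``clearly''), and the reverse inclusion by factoring a morphism of defect $\geq k$ through an intermediate object as a defect-one map composed with a map of defect $\geq k-1$, exactly as in the paper's inductive splitting. Your closing worry about the M\"{o}bius transition maps $\varphi,\psi$ is unnecessary, since the lemma lives entirely inside $\mathbb{C}E_{n}$ with its morphism basis and the isomorphism with $\mathbb{C}\PT_{n}$ plays no role in the argument.
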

\begin{proof}
Clearly $\Rad^{k}\mathbb{C}E_{n}\subseteq\Span\{E_{n}(t)\mid|\dom t|-|\im t|\geq k\}$.
So it suffices to show the other inclusion. Now, take $t$ such that
$|\dom t|-|\im t|\geq k$. It is enough to show that $E_{n}(t)$ can
be written as a product of $k$ elements from $\{E_{n}(t)\mid|\dom t|-|\im t|\geq1\}$
which is a basis for $\Rad\mathbb{C}E_{n}$. This is easily done by
induction. The case $k=1$ is trivial. Now, choose two distinct elements
$a$ and $a^{\prime}$ from $\dom t$ such that $t(a)=t(a^{\prime})$
and choose $b\notin\im t$. We can write $E_{n}(t)$ as a product
$E_{n}(t)=E_{n}(h)E_{n}(s)$ where
\[
s(i)=\begin{cases}
t(i) & i\neq a\\
b & i=a
\end{cases}\quad h(i)=\begin{cases}
i & i\neq b\\
t(a) & i=b
\end{cases}.
\]
Note that $|\dom h|-|\im h|=1$ and $|\dom s|-|\im s|\geq k-1$ so
by the induction hypothesis we are done.
\end{proof}
 \cref{lem:DescriptionOfRadToTheK} immediately implies the following corollary:
\begin{cor}
The Loewy length of $\mathbb{C}\PT_n$ is $n$.
\end{cor}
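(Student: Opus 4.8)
The plan is to read the Loewy length straight off the description of the radical powers in \cref{lem:DescriptionOfRadToTheK}. Since $\mathbb{C}\PT_{n}\cong\mathbb{C}E_{n}$ by \cref{prop:MainIsomorphism}, and the radical filtration $\Rad^{k}(-)$ is preserved by any algebra isomorphism, the Loewy length of $\mathbb{C}\PT_{n}$ equals that of $\mathbb{C}E_{n}$. So it suffices to find the least $k$ with $\Rad^{k}\mathbb{C}E_{n}=0$.

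By \cref{lem:DescriptionOfRadToTheK} we have $\Rad^{k}\mathbb{C}E_{n}=\Span\{E_{n}(t)\mid|\dom t|-|\im t|\geq k\}$, so this space is zero exactly when no $t\in\PT_{n}$ satisfies $|\dom t|-|\im t|\geq k$. Hence the least such $k$ is one more than the maximum value of $|\dom t|-|\im t|$ over all $t\in\PT_{n}$, and the whole argument reduces to computing this maximum.

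To bound it, observe that $|\dom t|\leq n$ always, while $|\im t|\geq1$ whenever $\dom t\neq\emptyset$ (and $|\dom t|-|\im t|=0$ when $\dom t=\emptyset$); thus $|\dom t|-|\im t|\leq n-1$. The bound is attained: take $t$ to be any constant map with domain $\overline{n}$, for which $|\dom t|=n$ and $|\im t|=1$. Consequently $\Rad^{n-1}\mathbb{C}E_{n}\neq0$, since it contains the basis element associated with such a constant map, while $\Rad^{n}\mathbb{C}E_{n}=0$. Therefore the least vanishing power is $n$, and the Loewy length is $n$.

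There is essentially no hard step here; the only points demanding care are the off-by-one in passing from ``the largest defect $|\dom t|-|\im t|$'' to ``the least vanishing power of the radical,'' and exhibiting the extremal morphism (a constant function on all of $\overline{n}$) to guarantee that $\Rad^{n-1}$ does not already vanish.
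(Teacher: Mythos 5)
Your proof is correct and follows the paper's route exactly: the paper derives the corollary as an immediate consequence of \cref{lem:DescriptionOfRadToTheK}, and your argument simply spells out the off-by-one bookkeeping (maximal defect $n-1$, attained by a constant map on $\overline{n}$, so $\Rad^{n-1}\neq0$ and $\Rad^{n}=0$) that the paper leaves implicit.
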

We can also use \lemref{DescriptionOfRadToTheK} to get a formula for the
dimension of $\Rad^{k}\mathbb{C}\PT_{n}$. Recall that the \emph{Stirling number of the second kind} $S(d,m)$ is the number of ways to partition a set of $d$ objects into $m$ non-empty subsets.
\begin{lem}
$\dim\Rad^{k}\mathbb{C}\PT_{n}$ equals
\[
\sum_{d=k+1}^{n}\sum_{m=1}^{d-k}{n \choose d}{n \choose m}S(d,m)m!.
\]
\end{lem}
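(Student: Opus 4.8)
The plan is to convert the dimension of $\Rad^{k}\mathbb{C}\PT_{n}$ into a counting problem for partial functions, leaning on the isomorphism of \cref{prop:MainIsomorphism} together with the explicit basis for $\Rad^{k}\mathbb{C}E_{n}$ provided by \cref{lem:DescriptionOfRadToTheK}.

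First I would observe that since $\varphi\colon\mathbb{C}\PT_{n}\to\mathbb{C}E_{n}$ is an algebra isomorphism it maps $\Rad^{k}\mathbb{C}\PT_{n}$ isomorphically onto $\Rad^{k}\mathbb{C}E_{n}$, so the two spaces have the same dimension and it suffices to compute $\dim\Rad^{k}\mathbb{C}E_{n}$. By \cref{lem:DescriptionOfRadToTheK} this space is spanned by the morphisms $E_{n}(t)$ with $|\dom t|-|\im t|\ge k$. These are pairwise distinct elements of the standard basis $\{E_{n}(t)\mid t\in\PT_{n}\}$ of $\mathbb{C}E_{n}$, hence linearly independent, so $\dim\Rad^{k}\mathbb{C}E_{n}$ is exactly the number of partial functions $t\in\PT_{n}$ satisfying $|\dom t|-|\im t|\ge k$.

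It then remains to count such partial functions, which I would stratify by $d=|\dom t|$ and $m=|\im t|$. A partial function with these parameters is determined by a choice of domain (a $d$-subset of $\overline{n}$, giving $\binom{n}{d}$ options), a choice of image (an $m$-subset of $\overline{n}$, giving $\binom{n}{m}$ options), and a surjection from the domain onto the image. The number of surjections from a $d$-element set onto an $m$-element set is $S(d,m)\,m!$, so the number of partial functions with fixed $d$ and $m$ is $\binom{n}{d}\binom{n}{m}S(d,m)\,m!$. The condition $|\dom t|-|\im t|\ge k$ reads $d-m\ge k$; since $m\ge1$ forces $d\ge k+1$, and $m$ ranges over $1\le m\le d-k$ while $d$ ranges over $k+1\le d\le n$, summing over all admissible pairs produces the stated formula.

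There is no substantial obstacle here; the only points requiring care are justifying that the spanning set of \cref{lem:DescriptionOfRadToTheK} is a basis (immediate, as its elements are distinct standard basis vectors) and fixing the summation ranges correctly---in particular noting that the empty partial function contributes $d=m=0$ and is therefore excluded once $k\ge1$, and that the inner sum is empty unless $d\ge k+1$.
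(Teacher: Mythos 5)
Your proposal is correct and follows essentially the same route as the paper: both reduce to counting the basis elements of $\Rad^{k}\mathbb{C}E_{n}$ given by \lemref{DescriptionOfRadToTheK} and enumerate them by choosing a domain of size $d$, an image of size $m$, and one of the $S(d,m)\,m!$ surjections between them. Your extra remarks on linear independence of the spanning set and on the summation ranges are sound but not a departure from the paper's argument.
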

\begin{proof}
We have only to count the basis elements given in \lemref{DescriptionOfRadToTheK}.
The number of total functions with domain of size $d$ and image of
size $m$ is 
\[
S(d,m)m!
\]
since one has $S(d,m)$ different ways to partition the domain into
$m$ non-empty subsets and then $m!$ ways to match the subsets with
image elements. There are ${n \choose d}{n \choose m}$ ways to choose
domain of size $d$ and image of size $m$ so all that is left to do is
to sum all possible sizes of the domain and image.
\end{proof}

\section{Quivers of submonoids of $\PT_{n}$ which are order ideals}
In this section we apply the method we have used to describe the quiver
of $\mathbb{C}\PT_{n}$ in order to find the quiver of the algebra
of other well known transformation monoids. All monoids discussed in this section are extensively studied in \cite[Chapter 14]{Ganyushkin2009b}. The important observation
is the following one: Let $N$ be a submonoid of $\PT_{n}$
that is also an order ideal, that is, if $y\in N$ and $x\leq y$
then $x\in N$. Let $D_{n}$ be the subcategory
of $E_{n}$ with the same set of objects and morphism set $\{E_{n}(t)\mid t\in N\}$.
Note that since $N$ is an order ideal we have $\varphi(N)\subseteq \mathbb{C}D_n$ and $\psi(D_n)\subseteq \mathbb{C}N$. Hence the restriction of $\varphi$ to $\mathbb{C}N$ gives
an isomorphism with $\mathbb{C}D_{n}$. $D_{n}$ is also an EI-category so we can again use \thmref{QuiverOfEICategories}
in order to compute the quiver of its algebra.

\subsection{Order-preserving partial functions}

Let $\PO_{n}$ be the monoid of all order-preserving partial functions on $\overline{n}$,
that is, 
\[
\PO_{n}=\{t\in\PT_{n}\mid\forall x,y\in\dom t\quad x\leq y\Rightarrow t(x)\leq t(y)\}.
\]
$\PO_{n}$ is indeed a submonoid of $\PT_{n}$ and an ideal with
respect to inclusion. So we get an isomorphism
\[
\mathbb{C}\PO_{n}\cong\mathbb{C}EO_{n}
\]
where $EO_{n}$ is the subcategory of $E_{n}$ with the same set of objects but whose only morphisms are
$E_{n}(t)$ for $t\in\PO_{n}$. As before we can take the skeleton
of $EO_{n}$ which is equivalent to $EO_{n}$ hence their
algebras have the same quiver. The skeleton will be denoted $SEO_{n}$. Its set of objects is
$\{\overline{k}\mid0\leq k\leq n\}$ and $SEO_{n}(\overline{k},\overline{r})$ are all the order-preserving functions from $\overline{k}$ onto $\overline{r}$. We continue to denote the morphism of $SEO_n$ associated to the function $t$ by $E_n(t)$. Similar to \lemref{ClassificationOfIrreducibleMorphisms} we have the following lemma.
\begin{lem}
\[
\IRR SEO_{n}(\overline{r},\overline{k})=\begin{cases}
SEO_{n}(\overline{r},\overline{k}) & r=k+1\\
\emptyset & \mbox{otherwise}
\end{cases}.
\]
\end{lem}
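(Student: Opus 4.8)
The plan is to follow the template of \lemref{ClassificationOfIrreducibleMorphisms}, with one structural adjustment forced by order-preservation. First I would record the two facts about $SEO_{n}$ that drive everything. The only order-preserving bijection of a finite chain is the identity, so $SEO_{n}(\overline{j},\overline{j})$ is trivial and the only isomorphisms in $SEO_{n}$ are the identity morphisms (this is the one place where $SEO_{n}$ genuinely differs from $SE_{n}$, where the endomorphism groups were the full $S_{j}$). Second, if $t:\overline{r}\to\overline{k}$ is order-preserving and onto, then its fibres $t^{-1}(1),\ldots,t^{-1}(k)$ are consecutive nonempty intervals partitioning $\overline{r}$; in particular, whenever $r>k$ some fibre has at least two elements, so there is an index $a$ with $t(a)=t(a+1)$.

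For the easy inclusion I would show that every $E_{n}(t)\in SEO_{n}(\overline{k+1},\overline{k})$ is irreducible. Such a $t$ is not a bijection, so $E_{n}(t)$ is not an isomorphism; and in any factorization $E_{n}(t)=E_{n}(g)E_{n}(h)$ through an intermediate object $\overline{m}$, surjectivity of both factors forces $k\le m\le k+1$. In the case $m=k$ the factor $g$ is an order-preserving onto self-map of $\overline{k}$, hence the identity; in the case $m=k+1$ the factor $h$ is the identity on $\overline{k+1}$. Either way one factor is an isomorphism, which is exactly the irreducibility condition.

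The substantive direction is to show that for $r>k+1$ every $E_{n}(t)\in SEO_{n}(\overline{r},\overline{k})$ factors nontrivially. Here the construction from \lemref{ClassificationOfIrreducibleMorphisms} --- sending one collision point to a fresh top value $k+1$ --- breaks order-preservation, so I would instead split one fibre into two consecutive subintervals, which keeps both factors inside $\PO_{n}$. Concretely, pick $a$ with $t(a)=t(a+1)=:c$ and set
\[
s(i)=\begin{cases} t(i) & i\le a\\ t(i)+1 & i>a\end{cases}\qquad h(i)=\begin{cases} i & i\le c\\ i-1 & i>c\end{cases}.
\]
Then I would check that $s:\overline{r}\to\overline{k+1}$ and $h:\overline{k+1}\to\overline{k}$ are order-preserving and onto (the fibres of $s$ are those of $t$ with $t^{-1}(c)$ split at $a$, while $h$ merges $c$ and $c+1$), that $h\circ s=t$, and that neither is a bijection since $r\ge k+2$. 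Hence $E_{n}(t)=E_{n}(h)E_{n}(s)$ with both factors non-isomorphisms, so $E_{n}(t)$ is not irreducible.

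The only real obstacle is precisely this order-preservation constraint on the factorization: the naive ``lift one point to the top'' map is no longer available, and one must verify that the interval-splitting maps $s$ and $h$ genuinely land in $\PO_{n}$ and recompose to $t$. Once the explicit formulas above are in hand this is a routine verification, and together with the easy inclusion it yields the stated description of $\IRR SEO_{n}(\overline{r},\overline{k})$.
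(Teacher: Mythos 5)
Your proposal is correct and follows essentially the same route as the paper: both split one multi-element fibre of $t$ into two consecutive subintervals to produce the non-isomorphism factors $s:\overline{r}\to\overline{k+1}$ and $h:\overline{k+1}\to\overline{k}$ with $h\circ s=t$ (the paper chooses $a$ maximal in the fibre and splits at $i<a$ versus $i\geq a$, which is only a cosmetic difference from your split at $i\leq a$ versus $i>a$). Your extra justification of the easy direction via the forced intermediate object $\overline{m}\in\{\overline{k},\overline{k+1}\}$ is a welcome elaboration of what the paper dismisses as clear.
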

\begin{proof}
It is clear that any morphism from $\overline{k+1}$ to $\overline{k}$ is irreducible. Now, assume that $r>k+1$ and let $E_{n}(t)\in SEO_{n}(\overline{r},\overline{k})$
be a morphism, that is, $t$ is a total onto order-preserving function $t:\overline{r}\to\overline{k}$. Choose some $b\in \overline{k}$ whose preimage $t^{-1}(b)$ contains more than one element and let $a$ be the maximal element in $t^{-1}(b)$.
Define $s:\overline{r}\to\overline{k+1}$ and $h:\overline{k+1}\to\overline{k}$
by
\[
s(i)=\begin{cases}
t(i) & i < a\\
t(i)+1 & i\geq a
\end{cases}\quad h(i)=\begin{cases}
i & i\leq b \\
i-1 & i> b
\end{cases}.
\]
It is clear that $E_{n}(s)$ and $E_{n}(h)$ are morphisms in $EO_{n}$
that are not isomorphisms, but $E_{n}(t)=E_{n}(h)E_{n}(s)$ so $E_{n}(t)$
is not an irreducible morphism.

\end{proof}

Note that the all the endomorphism groups of $SEO_{n}$ are trivial
and the number of order-preserving functions from $\overline{k+1}$
onto $\overline{k}$ is $k$. Using \thmref{QuiverOfEICategories}
we can conclude:
\begin{prop}
The vertex set of the quiver of $\mathbb{C}\PO_{n}$ is $\{0,\ldots,n\}$. There are $k$ arrows from $k+1$ to $k$, for $k=0,\cdots, n-1$, and no other arrows.
\end{prop}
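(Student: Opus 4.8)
The plan is to feed the skeletal EI-category $SEO_n$ into \thmref{QuiverOfEICategories}, using crucially that all of its endomorphism groups are trivial. The two facts I would carry over from the discussion immediately above are: each $SEO_n(\overline{k},\overline{k})$ is the trivial group, since the only order-preserving bijection of a finite chain onto itself is the identity; and by the preceding lemma the irreducible morphisms of $SEO_n$ are exactly those in $SEO_n(\overline{k+1},\overline{k})$, of which there are exactly $k$.

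For the vertex set I would invoke part~(1) of \thmref{QuiverOfEICategories}, which gives $\bigsqcup_{k=0}^{n}\Irr SEO_n(\overline{k},\overline{k})$. Because every endomorphism group is trivial, each $\Irr SEO_n(\overline{k},\overline{k})$ consists of a single (trivial) representation, so the vertices are in bijection with $k\in\{0,\ldots,n\}$, giving the vertex set $\{0,\ldots,n\}$.

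For the arrows I would use part~(2). The simplification here is that the acting group $SEO_n(\overline{k},\overline{k})\times SEO_n(\overline{k+1},\overline{k+1})$ is trivial, so the module $\mathbb{C}\IRR SEO_n(\overline{k+1},\overline{k})$ is a representation of the trivial group; every such representation is a direct sum of trivial representations, and the multiplicity of any irreducible constituent equals the dimension $|\IRR SEO_n(\overline{k+1},\overline{k})|$. Since the representations $U,V$ attached to the two vertices are both trivial, $U\otimes V^{\ast}$ is trivial, and therefore the number of arrows from the vertex $k+1$ to the vertex $k$ is exactly $|\IRR SEO_n(\overline{k+1},\overline{k})|=k$. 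Finally, the preceding lemma shows $\IRR SEO_n(\overline{r},\overline{k})=\emptyset$ whenever $r\neq k+1$, so there are no other arrows.

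There is no genuine obstacle left: all the substantive work — identifying the irreducible morphisms and counting the order-preserving surjections $\overline{k+1}\to\overline{k}$ — was front-loaded into the lemma. The only thing to be careful about is the bookkeeping, namely that triviality of the endomorphism groups collapses the representation-theoretic multiplicity in \thmref{QuiverOfEICategories} into a plain dimension count, and that one keeps the direction of the arrows (from $k+1$ down to $k$) consistent with the convention $(h,g)\ast f=hfg^{-1}$.
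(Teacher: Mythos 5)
Your proposal is correct and follows exactly the paper's route: the paper likewise just feeds $SEO_n$ into \thmref{QuiverOfEICategories}, noting that the endomorphism groups are trivial and that there are $k$ order-preserving surjections $\overline{k+1}\to\overline{k}$, with the preceding lemma ruling out all other arrows. Your write-up merely spells out the bookkeeping that the paper leaves implicit.
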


\subsection{Order-decreasing partial and total functions}

A function $t\in \PT_n$ is called \emph{order-decreasing} if $t(x)\leq x$ for every $x\in \dom(t)$.
In this section we want to consider the monoids of all total and partial order-decreasing functions on $\overline{n}$ denoted by $\F_n$ and $\PF_n$  respectively. We start by proving that these two families are in fact identical.

\begin{lem}\label{lem:IsoIncIPT}
$\F_{n+1} \cong  \PF_n$.
\end{lem}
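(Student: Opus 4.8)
The plan is to exhibit an explicit isomorphism $\Phi\colon\F_{n+1}\to\PF_n$ built from a shift of the ground set together with the convention that the forced fixed point $1$ of an order-decreasing total function plays the role of ``undefined''. Concretely, identify $\overline{n}$ with $\{2,\ldots,n+1\}\subseteq\overline{n+1}$ via $x\mapsto x+1$. Given $t\in\F_{n+1}$, note first that $t(1)=1$ is forced, since $t(1)\leq 1$; I then define a partial function $\Phi(t)$ on $\overline{n}$ by setting, for $x\in\overline{n}$, $\Phi(t)(x)=t(x+1)-1$ whenever $t(x+1)\geq 2$, and leaving $\Phi(t)(x)$ undefined when $t(x+1)=1$.

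First I would check that $\Phi$ lands in $\PF_n$: when $\Phi(t)(x)$ is defined we have $\Phi(t)(x)=t(x+1)-1\leq(x+1)-1=x$, so $\Phi(t)$ is order-decreasing. Next I would exhibit the two-sided inverse, sending $u\in\PF_n$ to the total function $t$ with $t(1)=1$ and, for $x\geq 2$, $t(x)=u(x-1)+1$ if $u(x-1)$ is defined and $t(x)=1$ otherwise; this is visibly order-decreasing and inverse to $\Phi$, so $\Phi$ is a bijection. As a sanity check, both monoids have order $(n+1)!$, since an order-decreasing total function on $\overline{m}$ admits $\prod_{x=1}^{m}x=m!$ choices while a partial one on $\overline{n}$ admits $\prod_{x=1}^{n}(x+1)=(n+1)!$ choices.

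The substance of the proof is verifying that $\Phi$ is multiplicative, i.e. $\Phi(ts)=\Phi(t)\Phi(s)$ for all $s,t\in\F_{n+1}$ (composition right to left). Fixing $x\in\overline{n}$, the value $\Phi(ts)(x)$ is defined exactly when $t(s(x+1))\geq 2$, in which case it equals $t(s(x+1))-1$. On the other side, $(\Phi(t)\Phi(s))(x)$ requires both $\Phi(s)(x)$ defined (that is, $s(x+1)\geq 2$) and $\Phi(t)$ defined at $\Phi(s)(x)=s(x+1)-1$ (that is, $t(s(x+1))\geq 2$), and then also evaluates to $t(s(x+1))-1$.

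The main obstacle — and the only place the order-decreasing hypothesis is genuinely used — is matching the domains of the two partial functions: the composite $\Phi(t)\Phi(s)$ carries the extra requirement $s(x+1)\geq 2$ that is absent from $\Phi(ts)$. This is resolved by the decreasing property of $t$: if $t(s(x+1))\geq 2$ then $s(x+1)\geq t(s(x+1))\geq 2$, so the apparently stronger condition is automatic. Hence the domains coincide and the values agree, giving multiplicativity; since $\Phi$ also sends the identity of $\F_{n+1}$ to the identity of $\PF_n$ (as $\Phi(\id)(x)=x$), it is a monoid isomorphism.
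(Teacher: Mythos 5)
Your proof is correct and is essentially the paper's argument: the paper relabels the ground set of $\F_{n+1}$ as $\{0,\ldots,n\}$ so that the forced fixed point $0$ plays the role of ``undefined,'' which is exactly your shift-by-one construction in different coordinates. The only difference is that you spell out the multiplicativity check (which the paper dismisses as clear), and that verification, including the domain-matching step via the order-decreasing property, is sound.
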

\begin{proof}
In this proof it will be more convenient to identify $\F_{n+1}$ with the set of order-decreasing total functions on $\{0,\ldots,n\}$.
Note that any $t\in \F_{n+1}$ must satisfy $t(0)=0$ so we can define $f:\F_{n+1}\to\PF_n $ by
\[
f(t)(i)=\begin{cases}
t(i) & t(i)\neq 0 \\
\text{undefined} & t(i)=0
\end{cases}.
\]
Conversely, given $s\in\PF_n$ we define $g: \PF_n \to \F_{n+1} $ by
\[
g(s)(i)=\begin{cases}
s(i) & i\in \dom(s) \\
0 & \text{otherwise}
\end{cases}.
\]
Clearly $f$ and $g$ are monoid homomorphisms and inverse to each other so we get the required isomorphism.
\end{proof}

One reason for the importance of $\F_n$ is the following result. A monoid $M$ is $\Lc$-trivial (that is, any
two distinct elements generate different left ideals) if and only if it is isomorphic to a submonoid of $\F_n$ for some $n\in\mathbb{N}$ \cite[Theorem 3.6]{Pin1986}. In particular, $\F_n$ is an $\Lc$-trivial monoid. We now turn to computing the quivers of $\mathbb{C}\PF_n$ and $\mathbb{C}\F_n$. Note that $\PF_n$ and $\F_n$ are not regular monoids, since in a regular
$\Lc$-trivial monoid every element is an idempotent. But this does
not prevent us from applying our method.
Clearly $\PF_n$ is a submonoid of $\PT_n$ which is an order ideal so we have an isomorphism
\[
\mathbb{C}\PF_{n}\cong\mathbb{C}EF_{n}
\]
where $EF_{n}$ is the subcategory of $E_{n}$ with the same set of objects but whose only morphisms are
$E_{n}(t)$ for $t\in\PF_{n}$. 

Note that for any $X\subseteq\overline{n}$, there is only one order-decreasing function $t$ such that $\dom t=\im t=X$, so every endomorphism
group in $EF_{n}$ is trivial. Moreover, if $X,Y\subseteq\overline{n}$
where $|X|=|Y|$ and $X\neq Y$ then one of $EF_{n}(X,Y)$ and $EF_{n}(Y,X)$
has to be empty. Hence, there are no distinct isomorphic objects in
$EF_{n}$ so $EF_{n}$ is its own skeleton. Hence, \thmref{QuiverOfEICategories}
implies that the vertices of the quiver of $\mathbb{C}EF_{n}$ are
precisely the objects of $EF_{n}$ and the morphisms are precisely
the irreducible morphisms. All that is left to do is to identify the irreducible
morphisms.

In order to do so, we introduce a technical definition. Let $X\subseteq\overline{n}$
and let $j\in X$. Define $j_{X}^{-}$ to be
\[
j_{X}^{-}=\max\{x\in\overline{n}\mid x\notin X,\quad x<j\}
\]

and if such a maximum does not exist then $j_{X}^{-}=1$. Note that
if $j_X^{-}<x<j$ then $x\in X$.

In the following $X$ will always be $\dom t$ so we will usually
omit it and write $j^{-}$ instead of $j_{X}^{-}$. Now we can state
and prove the following result.
\begin{lem}\label{lem:IrreduciblesInEF_n}
$E_{n}(t)$ is irreducible in $EF_{n}$ if and only if there exists
$j\in\dom t$ such that $t(i)=i$ for any $i\in\dom t\backslash\{j\}$
and $j_{X}^{-}\leq t(j)$ where $X=\dom t$. \end{lem}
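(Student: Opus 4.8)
The lemma characterizes the irreducible morphisms of the category $EF_n$ (order-decreasing partial functions). Recall that in an EI-category with trivial endomorphism groups, a morphism $E_n(t)$ is irreducible precisely when it is not an isomorphism and cannot be factored nontrivially — whenever $E_n(t) = E_n(h)E_n(s)$, one of $E_n(h), E_n(s)$ is an isomorphism. Here, since all endomorphism groups of $EF_n$ are trivial, the only isomorphisms are identity morphisms $E_n(\id_X)$, so irreducibility means: $t$ is not an identity, and $t$ cannot be written as $hs$ with both $h$ and $s$ non-identity order-decreasing functions.

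The claimed characterization says $E_n(t)$ is irreducible iff $t$ acts as the identity on all but one point $j \in \dom t$, and the single nontrivial value satisfies $j_X^- \le t(j)$ (with $X = \dom t$). Let me think about why.

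**What I would need to verify.** First, let me figure out what $|\dom t| - |\im t|$ is for an irreducible morphism. By Lemma~\ref{lem:ClassificationOfIrreducibleMorphisms}'s analogue, I'd expect irreducible morphisms to drop rank by exactly one. If $t$ is the identity on $\dom t \setminus \{j\}$ and moves only $j$ with $t(j) < j$, then $t(j)$ is some value already in $\dom t$ (it has to land somewhere), so $|\im t| = |\dom t| - 1$ exactly when $t(j)$ coincides with one of the fixed points. Actually I need $t$ to be onto its image as a morphism in the category. So the image is $(\dom t \setminus \{j\}) \cup \{t(j)\}$, and for the rank to drop by one we need $t(j) \in \dom t \setminus \{j\}$, i.e. $t(j)$ is a fixed point of $t$. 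Let me keep this in mind.

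**Proof strategy.**

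*Necessity (irreducible $\Rightarrow$ the stated form).* I would argue by contrapositive: suppose $t$ either moves at least two points nontrivially, or moves exactly one point $j$ but with $t(j) < j_X^-$. In each case I produce a nontrivial factorization $E_n(t) = E_n(h)E_n(s)$ through an intermediate object, with both factors order-decreasing and non-identity. This mirrors the explicit factorizations built in Lemmas~\ref{lem:ClassificationOfIrreducibleMorphisms} and~\ref{lem:DescriptionOfRadToTheK}. The delicate point is that the intermediate function must itself be order-decreasing, which is exactly where the condition $j_X^- \le t(j)$ enters: if $t(j)$ is too small (below $j_X^-$), I can insert an intermediate ``stopping point'' at some value in the gap $\{j_X^-+1,\ldots\}$ that is itself missing from $\dom t$, splitting the single large drop of $j$ into two smaller order-decreasing drops.

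*Sufficiency (the stated form $\Rightarrow$ irreducible).* Here I suppose $t$ has the stated form and that $E_n(t) = E_n(h)E_n(s)$ with $s: \dom t \to Y$ and $h: Y \to \im t$ both order-decreasing. I must show one factor is an identity. Since $t$ fixes every $i \ne j$ and is a composite, each such $i$ is fixed by both $s$ and $h$; the only freedom is at $j$. I then show that the single ``unit drop'' encoded by the condition $j_X^- \le t(j)$ cannot be split nontrivially into two order-decreasing steps, because any intermediate value $s(j)$ would have to lie strictly between $t(j)$ and $j$, yet every integer in the open interval $(j_X^-, j)$ already lies in $\dom t = X$ (by the remark that $j_X^- < x < j \Rightarrow x \in X$), forcing $s(j)$ to collide with a point $s$ already fixes — which would either make $s$ non-injective-to-$Y$ in a way preventing $h$ from being order-decreasing, or force one factor to be the identity.

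**Main obstacle.** The hard part will be the bookkeeping in the sufficiency direction: carefully ruling out every possible intermediate value for $s(j)$ and showing each leads either to a contradiction with order-decreasingness of $h$ or $s$, or collapses to one factor being an identity. The key structural fact making this work is the observation already recorded in the text — that $j_X^- < x < j$ implies $x \in X$ — so there is ``no room'' in the interval $(j_X^-, j)$ to insert a genuinely new intermediate value. I would state this gap fact explicitly and lean on it as the crux, and I would organize the argument as a case analysis on whether $t(j)$ equals $j_X^-$ or exceeds it, and on where a hypothetical intermediate value could land.
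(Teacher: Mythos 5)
Your overall strategy coincides with the paper's: necessity via explicit two-step factorizations (first reducing to a single moved point $j$, then using $t(j)<j_X^-$ to split the drop through an intermediate object), and sufficiency by showing both factors must fix every $i\neq j$ and that the gap fact $j_X^-<x<j\Rightarrow x\in X$ leaves no room for an intermediate value $s(j)$. However, one step would fail as literally written. In the necessity direction you propose to insert the intermediate stopping point ``at some value in the gap $\{j_X^-+1,\ldots\}$ that is itself missing from $\dom t$'' --- but by the very gap fact you invoke later, every integer strictly between $j_X^-$ and $j$ lies in $X=\dom t$, so that candidate set is empty. The correct (and essentially only) choice is $j_X^-$ itself: when $t(j)<j_X^-$ one has $j_X^-\geq 2$, so $j_X^-$ is a genuine maximum of $\{x\notin X\mid x<j\}$ and hence $j_X^-\notin X$; one then factors $t$ as $j\mapsto j_X^-\mapsto t(j)$ with the identity elsewhere, which is exactly what the paper does.

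A second point worth discarding is your preliminary expectation that irreducible morphisms of $EF_n$ drop rank by exactly one, by analogy with $SE_n$. This is false: since the only isomorphisms of $EF_n$ are identity morphisms, a non-identity order-decreasing \emph{bijection} between two distinct subsets can be irreducible --- e.g.\ $t:\{3\}\to\{2\}$ with $t(3)=2$ satisfies the lemma's criterion with $j=3$, $j_X^-=2$. You do not actually use this expectation later, but ``keeping it in mind'' risks restricting attention to the wrong class of morphisms. Finally, your sufficiency sketch is sound but can be closed more cleanly than the case analysis you anticipate: order-decreasingness gives $h(s(i))\leq s(i)\leq i$, forcing $s(i)=h(i)=i$ for $i\neq j$; then non-triviality of both factors gives $j_X^-\leq t(j)<s(j)<j$, so $s(j)\in X\setminus\{j\}$, where $h$ already acts as the identity, whence $t(j)=h(s(j))=s(j)$, a contradiction --- no further analysis of where the intermediate value lands is needed.
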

\begin{proof}
First assume that $E_{n}(t)$ is irreducible. Let $j$ be maximal
in $\dom t$ such that $t(j)<j$ (such $j$ must exist since $E_{n}(t)$
is not an isomorphism). If there is another $j^{\prime}\in\dom t$
such that $t(j^{\prime})<j^{\prime}$ then we can define $s,h\in\PF_{n}$
by
\[
s(i)=\begin{cases}
t(i) & i\in\dom t\backslash\{j\}\\
j & i=j
\end{cases}\quad h(i)=\begin{cases}
i & i\in\im s\backslash\{j\}\\
t(j) & i=j
\end{cases}.
\]

It is easy to observe that $s,h\in\PF_{n}$. We have already seen that the only isomorphisms of $EF_n$ are the identity morphisms. $E_{n}(h)$ and $E_{n}(s)$ are not isomorphisms because $s(j^\prime)=t(j^\prime)<j^\prime$ and $h(j)=t(j)<j$. Since $E_{n}(h)E_{n}(s)=E_{n}(t)$ we get a contradiction. So there is only one $j\in\dom t$ such that $t(j)<j$.
Now assume that $t(j)<j^{-}$. Note that this implies that $j^{-}\notin\dom t$
since $j^{-}\neq 1$. We can define
\[
s(i)=\begin{cases}
i & i\in\dom t\backslash\{j\}\\
j^{-} & i=j
\end{cases}\quad h(i)=\begin{cases}
i & i\in\im s\backslash\{j^{-}\}\\
t(j) & i=j^{-}
\end{cases}.
\]

Again, $E_{n}(h)$ and $E_{n}(s)$ are clearly not isomorphisms. It is easy to see that $s,h\in\PF_{n}$ and $E_{n}(h)E_{n}(s)=E_{n}(t)$
which contradicts the assumption and ends this direction. In the other
direction, assume that $t$ is of the required form but $E_{n}(t)=E_{n}(h)E_{n}(s)$
where $E_{n}(h)$ and $E_{n}(s)$ are not isomorphisms. Since for
any $i\in\dom t\backslash\{j\}$ we have $hs(i)=t(i)=i$ we must have that
$h(i)=s(i)=i$. Now, since $s$ and $h$ are not the identity on their domains,
we must have $ j^{-}\leq t(j)=h(s(j))<s(j)<j$. But this implies that
$s(j)\in\dom t\backslash\{j\}$ hence $hs(j)=s(j)\neq t(j)$, a contradiction.
\end{proof}
The next result now follows immediately.
\begin{prop}
The vertices in the quiver of the algebra $\mathbb{C}\PF_{n}$ are
in one-to-one correspondence with subsets of $\overline{n}$.
For $X,Y\subseteq\overline{n}$, the arrows from $X$ to $Y$ are
in one-to-one correspondence with onto functions $t:X\to Y$ for which
there exists $j\in X$ such that $t(i)=i$ for $i\in X\backslash\{j\}$
and $j_X^{-}\leq t(j)<j$.
\end{prop}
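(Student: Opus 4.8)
The plan is to assemble the machinery already in place rather than to build anything new. Since \prettyref{prop:MainIsomorphism}-style reasoning gives $\mathbb{C}\PF_{n}\cong\mathbb{C}EF_{n}$, the two algebras have the same quiver, and as already noted $EF_{n}$ is a skeletal EI-category, so \thmref{QuiverOfEICategories} applies directly to $\mathbb{C}EF_{n}$. The decisive structural feature, recorded just above, is that every endomorphism group $EF_{n}(X,X)$ is trivial (there is a unique order-decreasing $t$ with $\dom t=\im t=X$).

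First I would read off the vertices from part (1) of \thmref{QuiverOfEICategories}: the vertex set is $\bigsqcup_{X\subseteq\overline{n}}\Irr EF_{n}(X,X)$. Because each $EF_{n}(X,X)$ is the trivial group, $\Irr EF_{n}(X,X)$ is a single point, so the vertices are in bijection with the objects of $EF_{n}$, namely the subsets of $\overline{n}$. Next I would treat the arrows using part (2). For objects $X,Y$, the number of arrows from the vertex of $X$ to the vertex of $Y$ is the multiplicity of $U\otimes V^{\ast}$ in $\mathbb{C}\IRR EF_{n}(X,Y)$, where $U,V$ are the unique irreducibles of the trivial groups $EF_{n}(Y,Y)$ and $EF_{n}(X,X)$. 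Since these groups are trivial, $U\otimes V^{\ast}$ is just the trivial module of $\{1\}\times\{1\}$, and $\mathbb{C}\IRR EF_{n}(X,Y)$ is a direct sum of copies of it, one per basis morphism. Hence the arrows from $X$ to $Y$ are in natural bijection with the irreducible morphisms $E_{n}(t)$ having $\dom t=X$ and $\im t=Y$, that is, with irreducible onto functions $t\colon X\to Y$ in $\PF_{n}$.

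Finally I would invoke \lemref{IrreduciblesInEF_n} to describe these irreducible morphisms explicitly: they are the $t$ for which there is a single $j\in X$ with $t(i)=i$ for all $i\in X\setminus\{j\}$ and $j_{X}^{-}\le t(j)$. The remaining inequality $t(j)<j$ appearing in the statement is then automatic: $t$ is order-decreasing, so $t(j)\le j$, and if $t(j)=j$ then $t=\id_{X}$, which is an isomorphism and therefore contributes no arrow. Combining the two bijections (vertex $X$-to-$Y$ arrows $\leftrightarrow$ irreducible morphisms $X\to Y$, and the latter $\leftrightarrow$ the described functions) yields exactly the claimed correspondence.

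There is essentially no obstacle left at this stage; the genuine content was front-loaded into \lemref{IrreduciblesInEF_n} and into establishing that $EF_{n}$ is its own skeleton with trivial endomorphism groups. The only points demanding care are bookkeeping ones: getting the direction of the arrows right from \thmref{QuiverOfEICategories} (morphisms $X\to Y$ produce arrows from the $X$-vertex to the $Y$-vertex), and reconciling the lemma's condition $j_{X}^{-}\le t(j)$ with the proposition's sharper $j_{X}^{-}\le t(j)<j$ by discarding the identity. For this reason I would expect the write-up to be very short, amounting to little more than citing the two results and noting the $t(j)<j$ normalization.
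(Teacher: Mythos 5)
Your proposal is correct and matches the paper's route exactly: the paper states the proposition "follows immediately" from \lemref{IrreduciblesInEF_n} together with the preceding observations that $EF_{n}$ is its own skeleton with trivial endomorphism groups, so that \thmref{QuiverOfEICategories} identifies vertices with objects and arrows with irreducible morphisms. Your extra remark reconciling $j_{X}^{-}\leq t(j)$ with $j_{X}^{-}\leq t(j)<j$ (the identity being an isomorphism, hence excluded) is a correct and worthwhile bit of bookkeeping that the paper leaves implicit.
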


Using \lemref{IsoIncIPT} we get a description for the quiver of $\F_n$ as well.

\begin{cor}
The vertices in the quiver of the algebra $\mathbb{C}\F_{n}$ are
in one-to-one correspondence with subsets of $\overline{n-1}$ (where $\overline{0}=\emptyset$).
For $X,Y\subseteq\overline{n-1}$, the arrows from $X$ to $Y$ are
in one-to-one correspondence with onto functions $t:X\to Y$ for which
there exists $j\in X$ such that $t(i)=i$ for $i\in X\backslash\{j\}$
and $j_X^{-}\leq t(j)<j$.
\end{cor}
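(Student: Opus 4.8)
The plan is to obtain this corollary as a direct transport of the description of the quiver of $\mathbb{C}\PF_{n}$ through the monoid isomorphism of \lemref{IsoIncIPT}. Since that lemma gives $\F_{n+1}\cong\PF_{n}$, reindexing yields $\F_{n}\cong\PF_{n-1}$, and isomorphic monoids have isomorphic algebras and therefore identical quivers. Hence the quiver of $\mathbb{C}\F_{n}$ coincides with the quiver of $\mathbb{C}\PF_{n-1}$, which the preceding proposition already describes.

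First I would invoke \lemref{IsoIncIPT} with $n$ replaced by $n-1$ to get $\F_{n}\cong\PF_{n-1}$, noting that for $n=1$ this reads $\F_{1}\cong\PF_{0}$, the trivial monoid on $\overline{0}=\emptyset$, which explains the parenthetical convention in the statement. Next I would record that a monoid isomorphism induces a $\mathbb{C}$-algebra isomorphism $\mathbb{C}\F_{n}\cong\mathbb{C}\PF_{n-1}$, and that isomorphic algebras share the same ordinary quiver, the quiver being an isomorphism invariant built from $\dim\Ext^{1}$ between simple modules. Finally I would apply the preceding proposition verbatim with $n$ replaced by $n-1$: its vertices, the subsets of $\overline{n-1}$, are exactly the vertices claimed here, and its arrows, indexed by the onto functions $t\colon X\to Y$ admitting a single non-fixed point $j$ with $j_{X}^{-}\leq t(j)<j$, are exactly the arrows asserted in the corollary.

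There is essentially no obstacle here; the only point requiring care is the index bookkeeping. One must apply the shift from $\PF_{n-1}$ to $\F_{n}$ consistently to both the vertex set (subsets of $\overline{n-1}$, including the convention $\overline{0}=\emptyset$ in the base case) and the arrow description, and the defining condition on $t$ is copied unchanged because it refers only to the internal combinatorics of the subsets $X,Y\subseteq\overline{n-1}$ and not to the ambient $n$. In particular no fresh analysis of irreducible morphisms is needed: \lemref{IrreduciblesInEF_n} was already established at the level of $\PF$, and it is transported along the isomorphism rather than reproved for $\F_{n}$.
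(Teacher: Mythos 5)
Your proposal is correct and is exactly the route the paper takes: the corollary is obtained by transporting the proposition on the quiver of $\mathbb{C}\PF_{n-1}$ along the isomorphism $\F_{n}\cong\PF_{n-1}$ from \lemref{IsoIncIPT}, using that isomorphic monoids have isomorphic algebras and hence the same quiver. Your extra care with the index shift and the $\overline{0}=\emptyset$ convention is a sound elaboration of what the paper leaves implicit.
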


\subsection{Partial Catalan monoid}

Define $\PC_n$, called the \emph{partial Catalan monoid}, to be the monoid of all partial function on $\overline{n}$ which are both order-preserving and order-decreasing. 
The computation of the quiver of $\mathbb{C}\PC_n$ is quite similar to that of $\mathbb{C}\PF_n$. $\PC_n$ is indeed a submonoid of $\PT_n$ and an order ideal. We get an isomorphism 
\[
\mathbb{C}\PC_n \cong \mathbb{C}EC_n
\]
where $EC_{n}$ is the subcategory of $E_{n}$ with the same set of objects but whose only morphisms are $E_{n}(t)$ for $t\in\PC_{n}$. Note that $EC_{n}$ is obtained from $EF_n$  by erasing morphisms so it is clear that it has no isomorphic objects and all the endomorphism groups are trivial. So again we just have to identify the irreducible morphisms.

\begin{lem}
$E_{n}(t)$ is irreducible in $EC_{n}$ if and only if there exists
$j\in\dom t$ such that $t(i)=i$ for any $i\in\dom t\backslash\{j\}$
and $t(j)=j-1$. \end{lem}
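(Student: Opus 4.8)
This lemma characterizes the irreducible morphisms in $EC_n$, the category attached to the partial Catalan monoid. The claim is that $E_n(t)$ is irreducible iff $t$ fixes all points of its domain except one point $j$, which is sent to $j-1$. Comparing with $EF_n$ (the order-decreasing case), the condition $j_X^{-}\le t(j)<j$ is sharpened to $t(j)=j-1$. This makes sense: in $\PC_n$ we additionally require order-preservation, and the constraint $t(j)=j-1$ is exactly what the previous lemma's condition becomes when we impose that the single nontrivial value cannot ``skip'' below the previous present point while staying order-preserving.

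Let me think about whether the proof should mimic Lemma~\ref{lem:IrreduciblesInEF_n} or proceed more directly.

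The structure should closely parallel the proof of the order-decreasing case. For the forward direction, I would assume $E_n(t)$ is irreducible and first argue, exactly as before, that there is at most one $j\in\dom t$ with $t(j)<j$ — if there were two such points I could split $E_n(t)$ into a product of two non-isomorphisms by factoring off the action at one point (using that $\PC_n$ is closed under the relevant $s,h$). Then, having isolated the unique $j$ with $t(j)<j$, I would need to rule out $t(j)<j-1$. The key point is that in the order-preserving setting I cannot use the $j^{-}$ construction verbatim; instead, if $t(j)=m<j-1$, I want to factor $E_n(t)=E_n(h)E_n(s)$ where $s$ sends $j\mapsto j-1$ and fixes everything else of $\dom t$ (so $s\in\PC_n$, being order-preserving and decreasing), and $h$ sends $j-1\mapsto m$ while fixing the rest of $\im s$. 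The main thing to verify is that both $s$ and $h$ lie in $\PC_n$ (order-preserving and order-decreasing) and are not isomorphisms; order-decreasingness of $h$ is clear since $m<j-1$, and order-preservation needs a short check that $j-1\notin\dom t$ would not obstruct things — actually I expect $j-1\in\im s$ by construction and the fixing of other points keeps $h$ monotone.

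For the converse direction, I would assume $t$ has the stated form and suppose $E_n(t)=E_n(h)E_n(s)$ with neither factor an isomorphism. Since $hs(i)=t(i)=i$ for all $i\in\dom t\setminus\{j\}$, and the only isomorphisms in $EC_n$ are identities (inherited from $EF_n$, as noted in the text), I would deduce $h(i)=s(i)=i$ on those points. Then monotonicity and order-decreasingness of $s$ force $s(j)$ to lie strictly between $t(j)=j-1$ and $j$, which is impossible since there is no integer strictly between $j-1$ and $j$; this immediately yields the contradiction, so $E_n(t)$ is irreducible. This direction is actually cleaner than in $EF_n$ precisely because $t(j)=j-1$ leaves no room for an intermediate value.

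The main obstacle I anticipate is the forward direction's second step: constructing the factorization that rules out $t(j)<j-1$ while guaranteeing both factors remain order-preserving. I would need to be careful that inserting the intermediate value $j-1$ (rather than the $j^{-}$ used in the $EF_n$ proof) keeps $s$ monotone — this should work because $s$ fixes all of $\dom t\setminus\{j\}$ and merely replaces $t(j)$ by the larger value $j-1<j$, which cannot violate order-preservation, and $h$ then maps $j-1$ down to $t(j)$ while acting as the identity elsewhere, which is monotone since $t(j)<j-1$ sits below all other (fixed) image points that exceed $j-1$. Once these membership checks in $\PC_n$ are in place, the argument closes exactly as in Lemma~\ref{lem:IrreduciblesInEF_n}.
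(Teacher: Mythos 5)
Your overall strategy matches the paper's: establish a unique $j$ with $t(j)<j$ exactly as in Lemma~\ref{lem:IrreduciblesInEF_n}, then rule out $t(j)<j-1$ by factoring through the intermediate value $j-1$. Two remarks. First, for the direction ``$t$ of the stated form $\Rightarrow$ irreducible'' the paper is slicker: since $t(j)=j-1$ implies $j_X^{-}\leq t(j)<j$, the morphism is irreducible in $EF_{n}$ by Lemma~\ref{lem:IrreduciblesInEF_n}, and irreducibility passes to the subcategory $EC_{n}$ for free; your direct argument (no integer strictly between $j-1$ and $j$) is equally valid and is really the same computation unpacked. Second, and more importantly, the verification you yourself flag as ``the main obstacle'' is the one place your justification misses the point: to see that $h$ (sending $j-1\mapsto t(j)$ and fixing the rest of $\im s$) is order-preserving, the danger is not the image points \emph{above} $j-1$, which you address, but a possible $k\in\dom t$ with $t(j)<k<j-1$: such a $k$ is fixed by $h$, so $h(k)=k>t(j)=h(j-1)$ while $k<j-1$, breaking monotonicity. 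The paper closes this with the one-line observation that no $k\in\dom t$ satisfies $t(j)<k<j$, since such a $k$ would have $t(k)=k>t(j)$ with $k<j$, contradicting order-preservation of $t$ itself. Your argument is correct once this observation is inserted, but as written the reason you give for $h\in\PC_{n}$ does not cover the case that actually needs checking.
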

\begin{proof}
Assume that $t$ is of the required form. By \lemref{IrreduciblesInEF_n} $E_n(t)$ is irreducible in $EF_n$ so it must be irreducible in $EC_n$ as well.
In the other direction, we can prove precisely as in \lemref{IrreduciblesInEF_n} that there is a unique $j\in\dom t$ such that $t(j)<j$.
Now assume that $t(j)<j-1$. Note that there is no $k\in \dom (t)$ such that $t(j)<k<j$ because this will imply that $t(j)<k=t(k)$ in contradiction to the fact that $t$ is order-preserving.  Define
\[
s(i)=\begin{cases}
i & i\in\dom t\backslash\{j\}\\
j-1 & i=j
\end{cases}\quad h(i)=\begin{cases}
i & i\in\im s\backslash\{j-1\}\\
t(j) & i=j-1
\end{cases}.
\]

Again, $D_{n}(h)$ and $D_{n}(s)$ are clearly not isomorphisms. It is easy to see that $s,h\in \PC_{n}$ and $E_{n}(h)E_{n}(s)=E_{n}(t)$
which contradicts the assumption and ends the proof.
\end{proof}

We conclude:

\begin{prop}
The vertices in the quiver of the algebra $\mathbb{C}\PC_{n}$ are
in one-to-one correspondence with subsets of $\overline{n}$.
For $X,Y\subseteq\overline{n}$, the arrows from $X$ to $Y$ are
in one-to-one correspondence with onto functions $t:X\to Y$ for which
there exists $j\in X$ such that $t(i)=i$ for $i\in X\backslash\{j\}$
and $t(j)=j-1$.
\end{prop}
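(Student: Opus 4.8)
The plan is to obtain the proposition as an essentially immediate application of \thmref{QuiverOfEICategories} to the skeletal EI-category $EC_n$, following the same route used for $\mathbb{C}\PT_n$, but with the decisive simplification that every endomorphism group of $EC_n$ is trivial. First I would record the inputs already established just above: the isomorphism $\mathbb{C}\PC_n\cong\mathbb{C}EC_n$, the fact that $EC_n$ has no two distinct isomorphic objects (so it coincides with its own skeleton), and the fact that $EC_n(X,X)$ is the trivial group for every $X\subseteq\overline{n}$. These let me apply \thmref{QuiverOfEICategories} to $EC_n$ directly, with no Morita reduction needed.

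For the vertices, part~(1) of \thmref{QuiverOfEICategories} gives the vertex set as $\bigsqcup_{X\subseteq\overline{n}}\Irr EC_n(X,X)$. Since each $EC_n(X,X)$ is trivial, $\Irr EC_n(X,X)$ is a singleton (the trivial representation), so the vertices are in bijection with the objects of $EC_n$, namely the subsets of $\overline{n}$. This yields the first assertion.

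For the arrows I would specialize part~(2). Fixing vertices $X$ and $Y$, the representations $V\in\Irr EC_n(X,X)$ and $U\in\Irr EC_n(Y,Y)$ are both the trivial representation of the trivial group, so $U\otimes V^{\ast}$ is simply the trivial representation of $EC_n(Y,Y)\times EC_n(X,X)$. The module $\mathbb{C}\IRR EC_n(X,Y)$ thus carries an action of a trivial group, and its decomposition into irreducibles is a direct sum of trivial summands, one per basis element. Hence the multiplicity of $U\otimes V^{\ast}$, i.e.\ the number of arrows from $X$ to $Y$, equals $\dim\mathbb{C}\IRR EC_n(X,Y)=|\IRR EC_n(X,Y)|$; equivalently, arrows from $X$ to $Y$ are in one-to-one correspondence with the irreducible morphisms $E_n(t)$ having $\dom t=X$ and $\im t=Y$, that is, with irreducible onto functions $t\colon X\to Y$.

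Finally I would substitute the characterization of irreducible morphisms from the preceding lemma: such an $E_n(t)$ is irreducible in $EC_n$ exactly when there is some $j\in X$ with $t(i)=i$ for all $i\in X\setminus\{j\}$ and $t(j)=j-1$. This is verbatim the description of arrows claimed in the proposition, finishing the argument. I do not anticipate a real obstacle, since all the representation-theoretic content has already been absorbed into the irreducibility lemma and the triviality of the endomorphism groups collapses the multiplicity computation of \thmref{QuiverOfEICategories} to a bare count of irreducible morphisms; the only point needing care is the bookkeeping of variance in \thmref{QuiverOfEICategories}, namely checking that $\IRR EC_n(X,Y)$ (morphisms from $X$ to $Y$) is indexed so that the associated arrows run from vertex $X$ to vertex $Y$, consistent with the orientation fixed earlier for $\mathbb{C}\PT_n$.
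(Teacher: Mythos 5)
Your proposal is correct and matches the paper's intended argument exactly: the paper states this proposition with "We conclude:" and no written proof, because it follows immediately from \thmref{QuiverOfEICategories} applied to $EC_n$ (its own skeleton, with trivial endomorphism groups) together with the preceding lemma classifying the irreducible morphisms, which is precisely the route you spell out. Your variance check on the orientation of $\IRR EC_n(X,Y)$ is also consistent with the convention used for $\mathbb{C}\PT_n$ and $\mathbb{C}\PF_n$.
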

{\bf Acknowledgement:} The author is grateful to the referee for his\textbackslash  her valuable comments and suggestions, and in particular for suggesting \lemref{IsoIncIPT}.

\bibliographystyle{plain}
\bibliography{library}

\end{document}